\documentclass[hidelinks,onefignum,onetabnum]{siamart250211}
\usepackage{amsmath,amssymb}

\graphicspath{{Figure/}}

\makeatletter
\def\EquationsBySection{\def\theequation
	{\thesection.\arabic{equation}}%
	\@addtoreset{equation}{section}}

\makeatother \EquationsBySection

\allowdisplaybreaks[3]
	
\title{Spurious-mode-free finite element method for scattering resonances in transmission problems}
\date{}
\author{Bo Gong \thanks{School of Mathematics, Statistics and Mechanics, Beijing University of Technology, Beijing, 100124, China. ({\tt  gongbo@bjut.edu.cn}).}
\and Jiguang Sun \thanks{Department of Mathematical Sciences, Michigan Technological University, Houghton, MI 49931, U.S.A. ({\tt  jiguangs@mtu.edu}).}}

\begin{document}
\maketitle

\begin{abstract}
    Scattering resonances arise in wave phenomena and play an important role in many applications. While extensive theoretical studies have been conducted, effective numerical computation remains limited, and most existing methods suffer from spurious modes. In this paper, we propose a spurious-mode-free method for computing scattering resonances in transmission problems. The unbounded domain is truncated using a Dirichlet-to-Neumann (DtN) map. The resonances are formulated as eigenvalues of a holomorphic Fredholm operator function, which is discretized by the finite element method. The spectrum indicator method is then used to compute the eigenvalues of the nonlinear matrix eigenvalue problems. We establish optimal order convergence and present extensive examples that demonstrate the effectiveness of the proposed method. The results are consistent with existing theoretical findings in the literature and offer new insights that may inform further theoretical developments.
\end{abstract}

\noindent{\it Keywords}: scattering resonance, transmission problem, holomorphic Fredholm operator function, eigenvalue problem, finite element

\section{Introduction}
Scattering resonances play an important role in wave applications such as acoustics, electromagnetics, and quantum mechanics. For example, resonances are associated with unusual physical phenomena that are critical in the design of novel materials. Mathematically, they are defined as the poles of the meromorphic continuation of the scattering operator \cite{LaxPhillips1989, dyatlov2019mathematical}. In this paper, we do not distinguish between poles and resonances, although the terminology may vary in different contexts. Resonances encode important physical information about the scatterer \cite{Uberall1983}. Extensive theoretical investigations, such as the distribution, bounds on location, and asymptotic behavior of resonances, have been carried out (see, e.g., \cite{Popov1999}).
In contrast, computational methods for scattering resonances are far from mature. 

Existing numerical approaches can be divided into two main categories. The first category consists of methods based on boundary integral equations (BIEs), which automatically satisfy the outgoing condition \cite{SteinbachUnger2012, Misawa2017SIAM, steinbach2017combined, hiptmair2022spurious, grubivsic2023detecting, ma2023computation, LiuSunZhang2025, Matsushima2025ProcA}. These methods require discretization only on the boundary, leading to relatively small algebraic systems. For instance, a Galerkin boundary element method is analyzed in \cite{SteinbachUnger2012}. A combined integral equation approach is proposed in \cite{steinbach2017combined}, and Nyström methods have been employed in \cite{ma2023computation} to achieve high accuracy. However, BIE-based methods often suffer from non-physical (spurious) resonances, as the spectrum of a BIE formulation is generally not equivalent to that of the corresponding PDE problem. This issue is particularly severe for transmission problems, where both spurious and physical resonances lie in the fourth quadrant of $\mathbb{C}$, making them difficult to distinguish.

The second category consists of finite element methods (FEM), which typically truncate the unbounded domain. The perfectly matched layer (PML) technique has been employed in \cite{kim2009computation, NannenWess2018BIT, Halla2022SINUM}. While PML formulations may lead to linear eigenvalue problems, they can introduce spurious eigenvalues that are difficult to identify \cite{steinbach2017combined, NannenWess2018BIT}. Other approaches, such as Hardy space infinite elements, have also been investigated \cite{HohageNannen2009}. All of the aforementioned methods exhibit spurious resonances to some extent, despite attempts to mitigate this issue through mode shifting or parameter tuning \cite{Misawa2017SIAM, NannenWess2018BIT}. Recently, a spurious-mode-free finite element method using the Dirichelt-to-Neumann mapping was proposed to compute scattering resonances of impenetrable obstacles \cite{xi2024analysisfiniteelementdtn}. Using the abstract approximation theory for eigenvalue problems of holomorphic Fredholm operator functions \cite{karma1996approximation, karma1996approximation2}, the error estimates were obtained. The resulting nonlinear matrix eigenvalue problems are solved by the spectral indicator method (SIM) based on contour integrals \cite{xi2023parallel}. 

Compared to impenetrable obstacles, scattering resonances of penetrable obstacles are more complicated and interesting. According to \cite{Popov1999}, if $\Omega$ is smooth and convex, and index of refraction is greater than $1$, there exists an infinite sequence of resonances tending to the real axis from below. These resonances are closely related to the quasi-resonances or resonance frequencies \cite{Lenoir1992, grubivsic2023detecting}. In contrast, if the refractive index inside is less than $1$, there exists a strip free of resonance beneath the real axis \cite{galkowski2018quantum, moiola2019acoustic, dyatlov2019mathematical}. In this paper, we employ the finite element DtN method to the transmission problem. The optimal order of convergence for eigenvalues is proved. The impact of the order of the truncated DtN mapping is studied numerically. Various examples are presented to demonstrate the effectiveness of the proposed method and the theoretical results in the literature. The results on non-convex/non-smooth domains also provide new insights that may inform further theoretical developments.

Scattering resonances can be formulated as the eigenvalues of holomorphic Fredholm operator functions. The computational approach of FEM+DtN+SIM has been applied to compute the scattering resonances of several problems including acoustic obstacles \cite{xi2024analysisfiniteelementdtn}, fluid-solid interaction \cite{XiJi2025}, Schr\"{o}dinger operators \cite{Gong2026AML}, metallic grating structures \cite{xi2024finite}.
The rest of the paper is organized as follows. In Section 2, we introduce the scattering resonances in transmission problems, reduce the problem to a bounded domain using the DtN mapping, and formulate resonances as eigenvalues of an operator function. In Section 3, we analyze the truncated DtN mapping, propose a finite element method, and prove the optimal convergence of the resonances. Abstract approximation theory for holomorphic Fredholm operator functions is briefly recalled. In Section 4, extensive numerical experiments are presented to demonstrate the convergence order, the impact of the truncation order for the DtN mapping, and impact of the radius of the truncated domain. In particular, the results confirm the theory of the behavior of resonances and the resonance-free zone on the complex plane for transmission problems. Section 5 contains conclusions and future work.




\section{Scattering Resonances in Transmission Problems}
Let $\Omega\subset \mathbb{R}^2$ be a bounded Lipschitz domain. Denote by $\Omega^c = \mathbb{R}^2 \setminus \overline{\Omega}$ and $\Gamma = \partial\Omega$. 
The refractive index is given by
\begin{equation*}
n(x) = \left\{
\begin{aligned}
&n_i(x),\quad & &x\in \Omega,
\\
&1,\quad & &x\in \Omega^c,
\end{aligned}
\right.
\end{equation*}
with $n_i(x)> 1$ or $0<n_i(x) < 1$. Let  $k \in \mathbb C$ be the wavenumber. Denote by $u^{i}$ the incident field, $u$ the scattered field, and $u^{t}$ the total field. The transmission problem is to find $u^t$ (or $u$) such that
\begin{align}
\Delta u^{t} + k^2 n(x) u^{t} &= 0,\quad \text{in} \; \mathbb{R}^2,
\label{u^t1}
\\
u^{t} &= u^{i} + u,
\label{u^t2}
\end{align}
where $u$ satisfies the Sommerfeld radiation condition
\begin{equation}
\frac{\partial u}{\partial r} - iku = o(r^{-1/2}),\quad r = |x|\rightarrow \infty.
\label{u^t3}
\end{equation}
Equivalently, the above problem can be written as the problem of finding $u$ such that
\begin{align}
\Delta u + k^2 n(x) u &= g,\quad \text{in}\;\mathbb{R}^2
\label{u1}
\\
\frac{\partial u}{\partial r} - iku &= o(r^{-1/2}),\quad r \rightarrow \infty,
\label{u2}
\end{align}
where $g = k^2(1-n(x)) u^{i}$. The well-posedness of the above problem, see, e.g., Lemma~2.2 of \cite{moiola2019acoustic}, holds when $\text{Im}(k) \geqslant 0$ and $k\neq 0$, i.e., \eqref{u1}--\eqref{u2} has a unique solution $u \in H^1_{\text{loc}}(\mathbb{R}^2)$. The scattering (solution) operator $S(k)$ is such that $ u = S(k)g$ for $g \in L^2(\Omega)$. $S(k)$ can be meromorphically continued to  the lower half complex plane. The poles of $S(k)$ are called scattering resonances \cite{zworski1999resonances, taylor2010partial}. In fact, the Sommerfeld radiation condition no longer holds when $k$ is such that ${\rm Im}(k)<0$ and outgoing condition should be used for the scattered field. In particular, $u$ is said to be outgoing if it has the series expansion of the form
\begin{equation}\label{outgoingconditon}
u(x)=\sum_{n=-\infty}^{\infty}a_nH_n^{(1)}(kr)e^{in\theta},\ \ \ |x|>r_0,
\end{equation}
where $\theta=\arg(x)$, $r_0>0$ is some constant large enough, and $H_n^{(1)}(\cdot)$ is the first kind Hankel function of order $n$.

The transmission problem \eqref{u1}-\eqref{u2} is defined on $\mathbb{R}^2$. We truncate the domain and apply the DtN mapping. Let $\Omega_R$ be a disk centered at the origin with radius $R$ and denote its boundary by $\Gamma_R$. Here $R$ is large enough such that $\Omega_R$ contains $\Omega$ in its interior. The DtN mapping $T(k)$ is given by (see, e.g., \cite{hsiao2011error})
\begin{equation}
T(k)u = \sum_{n=0}^{+\infty}{\!}^{{}^{\scriptstyle{\prime}}} \frac{k}{\pi}\frac{{H^{(1)}_n}^{\prime}\!\!(kR)}{H^{(1)}_n(kR)}\int_{0}^{2\pi} u(R,\phi)\cos(n(\theta - \phi))\text{d}\phi.
\label{T}
\end{equation}
The prime on the summation denotes that the zeroth term has weight one-half. For simplicity, we introduce
\begin{equation*}
a_n(u) = \frac{1}{\pi}\int_0^{2\pi}u(R,\phi)\cos(n\phi)\text{d}\phi,\quad
b_n(u) = \frac{1}{\pi}\int_0^{2\pi}u(R,\phi)\sin(n\phi)\text{d}\phi.
\end{equation*}
Then 
\begin{equation*}
T(k)u = \sum_{n=0}^{+\infty}{\!}^{{}^{\scriptstyle{\prime}}}\frac{k{H^{(1)}_n}^{\prime}(kR)}{H^{(1)}_n(kR)}( a_n(u) \cos(n\theta)
+ b_n(u)\sin(n\theta)).
\end{equation*}
Denote by $Z$ the set of all the zeros of Hankel function $H^{(1)}_n(\cdot)$ for all natural number $n$, and by $\mathbb{R}^-$ the set of non-positive real number. Define $\Lambda = \mathbb{C}\setminus(\mathbb{R}^- \cup Z)$.

Using the DtN mapping, we obtain a problem on the bounded domain $\Omega_R$ that is equivalent to \eqref{u1}-\eqref{u2}:
\begin{align}
\Delta u + k^2 n(x) u &= g\quad \text{in\;}\Omega_R,
\label{uR1}
\\
\partial_{\nu} u &= T(k) u\quad \text{on\;} \Gamma_R.
\label{uR2}
\end{align}
Let $(\cdot,\cdot)$, $(\cdot,\cdot)_1$ and $\langle\cdot,\cdot\rangle$ denote the $L^2(\Omega_R)$ inner product, the $H^1(\Omega_R)$ inner product, and the $H^{-1/2}(\Gamma_R)$--$H^{1/2}(\Gamma_R)$ duality, respectively. Write $V:= H^1(\Omega_R)$. The weak formulation for \eqref{uR1} and \eqref{uR2} is to find $u \in V$ such that
\begin{equation}
(\nabla u,\nabla v) - (k^2n(x)u,v) - \langle T(k)u,v\rangle = (g,v)
\quad \forall 
v\in V.
\label{u weak}
\end{equation}
For $k\in \Lambda$, we define an operator $B(k) : V \rightarrow V$ such that
\begin{equation}
(B(k)u,v)_1 = (\nabla u,\nabla v) - (k^2n(x)u,v) - \langle T(k)u,v\rangle \quad \forall v\in V.
\label{B}
\end{equation}
The scattering resonances are the eigenvalues of $B(\cdot)$,
that is, $k$'s such that
\begin{equation}
B(k)u = 0
\label{Bu=0}
\end{equation}
for some nontrivial $u$. The resonances are the eigenvalues of $B(\cdot)$.

\section{Finite Element Method and Error Estimates}

We first truncate the infinite series for $T(k)$ in \eqref{T} to obtain 
\begin{equation*}
T^N(k)u = \sum_{n=0}^{N}{\!}^{{}^{\scriptstyle{\prime}}}\frac{k{H^{(1)}_n}^{\prime}(kR)}{H^{(1)}_n(kR)}( a_n(u) \cos(n\theta)
+ b_n(u)\sin(n\theta))
\end{equation*}
and define $B^N(k):V\rightarrow V$ such that
\[
(B^N(k)u,v)_1 = (\nabla u,\nabla v) - (k^2n(x)u,v) - \langle T^N(k)u,v\rangle \quad \forall v\in V.
\]

Then we construct a series of conforming finite element spaces $V_n := V_{h_n}$ such that $V_n\subset V$, where $h_n$ denotes the mesh size. The discrete operator $B^N_n(k) : V_n\rightarrow V_n$ is defined by
\[
(B^N_n(k)u_n,v_n)_1 = (\nabla u_n,\nabla v_n) - (k^2n(x)u_n,v_n) - \langle T^N(k)u_n,v_n\rangle \quad \forall v_n\in V_n.
\]
This leads to a nonlinear matrix function $B^N_n(k)$, whose eigenvalues are computed by the parallel SIM \cite{xi2023parallel} based on contour integrals.

The rest of this section is devoted to error estimates for the eigenvalues using the abstract spectral approximation theory of holomorphic Fredholm operator functions \cite{karma1996approximation, karma1996approximation2}. We first prove the convergence of eigenvalues of $B^N(k)$ using the property of the truncated DtN mapping $T^N(k)$. Then we show the optimal order of convergence for eigenvalues of $B^N_n(k)$. 

\subsection{Holomorphic Fredholm Operator Functions}
We recall some basics about holomorphic Fredholm operator functions \cite{karma1996approximation}. Let $X$ and $Y$ be Banach spaces. An operator $F\in \mathcal{L}(X,Y)$ is Fredholm if the dimension of the nullspace $\mathcal{N}(F)$ is finite, the range $\mathcal{R}(F)$ is closed, and the codimension of $\mathcal{R}(F)$ is finite. The index of a Fredholm operator $F$ is defined as $\text{dim}\mathcal{N}(F) - \text{codim}\mathcal{R}(F)$. We say that an operator function $F(\cdot) : \Lambda\rightarrow \mathcal{L}(X,Y)$ is a Fredholm operator function (with index zero), if $F(k)$ is Fredholm (with index zero) for all $k\in \Lambda$.

Let $F(\cdot) : \Lambda\rightarrow \mathcal{L}(X,Y)$ be a holomorphic Fredholm operator function. Define
\begin{equation*}
\rho(F) = \{k\in \Lambda\,:\, F(k)^{-1}\in \mathcal{L}(Y,X)\}\quad\text{and}\quad
\sigma(F) = \Lambda\setminus \rho(F)
\end{equation*}
as the resolvent set and the spectrum of $F(\cdot)$, respectively. If $\rho(F)$ is non-empty, then $\sigma(F)$ has no cluster point in $\Lambda$ and contains only eigenvalues of $F(\cdot)$. Moreover, the inverse $F^{-1}(\cdot):\rho(F)\rightarrow\mathcal{L}(Y,X)$ is holomorphic except poles of finite order.

A vector $(x_0,x_1,\dots,x_j)$ with $x_0\neq 0$ is called a Jordan chain 
of length $j+1$ of $F(\cdot)$ at $\lambda_0\in \sigma(F)$ if
\begin{equation*}
F(\lambda_0)x_{\ell} + F^{(1)}(\lambda_0)x_{\ell-1} + \cdots + \frac{F^{(\ell)}(\lambda_0)}{\ell !}x_0 = 0,\quad \ell = 0, 1,\dots, j.
\end{equation*}
The elements of the Jordan chain are called generalized eigenfunctions,
and the span of all the generalized eigenfunctions of $F(\cdot)$ at $\lambda_0$
is called the generalized eigenspace of $F(\cdot)$ at $\lambda_0$,
denoted by $G(F,\lambda_0)$.
The maximal length of Jordan chains of $F(\cdot)$ at $\lambda_0$
is equal to the order of pole of $F^{-1}(\cdot)$ at $\lambda_0$.

\subsection{Spectral Approximation Theory}
We present the main results of the spectral approximation of eigenvalues of holomorphic Fredholm operator functions \cite{vauinikko1976funktionalanalysis, karma1996approximation, karma1996approximation2}. Let $X$, $Y$, $X_n$, $Y_n$ be Banach spaces and let the operators $p_n\in\mathcal{L}(X,X_n)$ and $q_n\in \mathcal{L}(Y,Y_n)$ satisfy $\Vert p_n x\Vert \rightarrow \Vert x\Vert$ and $\Vert q_n y\Vert \rightarrow \Vert y\Vert$ as $n\rightarrow \infty$ for all $x\in X$ and $y\in Y$. Let $\{x_n\}$ be such that $x_n \in X_n$. If for every subsequence $\{x'_n\}$ of $\{x_n\}$ there exists a subsequence $\{x''_n\}$ of $\{x'_n\}$ and $x\in X$ such that 
$\Vert x''_n - p_n x\Vert \rightarrow 0$ as $n\rightarrow \infty$, then $\{x_n\}$ is called (discrete) compact. The compactness of $\{y_n\}$ is defined analogously.

Let $F\in \mathcal{L}(X,Y)$ and $F_n\in \mathcal{L}(X_n,Y_n)$.
\begin{itemize}
\item
$F_n$ is said to converge to $F$, if $\Vert F_n p_n x - q_n F x\Vert \rightarrow 0,\; n\rightarrow\infty,\; \forall x\in X$.
\item
$F_n$ is said to converge uniformly to $F$, if $\Vert F_n p_n - q_n F\Vert\rightarrow 0$, $n\rightarrow \infty$.
\item
$F_n$ is said to converge stably to $F$, if $F_n$ converges to $F$ and there exists $n_0$ such that $F_n$ is invertible with $\Vert F_n^{-1}\Vert\leqslant C$, $\forall n\geqslant n_0$.
\item
$F_n$ is said to converge compactly to $F$, if $F_n$ converges to $F$ and $\{F_n x_n\}$ is compact for any $\Vert x_n\Vert \leqslant 1$, $\forall n$.
\item
$F_n$ is said to converge regularly to $F$, if $F_n$ converges to $F$ and $\{x_n\}$ is compact for any $\Vert x_n\Vert\leqslant 1$, $\forall n$ such that
$\{F_n x_n\}$ is compact. 
\end{itemize}

We consider an operator function $F(\cdot)$ and a sequence of approximation operator functions $F_n(\cdot)$'s. In particular, we assume that
$F(\cdot) : \Lambda\rightarrow \mathcal{L}(X,Y)$
is a holomorphic Fredholm operator function with $\rho(F)$ non-empty,
and $F_n(\cdot) : \Lambda\rightarrow \mathcal{L}(X_n,Y_n)$
are holomorphic Fredholm operator functions with index zero such that
on each compact $\Lambda_1\subset \Lambda$ it holds 
$
\Vert F_n(k) \Vert \leqslant C,\; \forall k\in \Lambda_1,\; \forall n.
$

Let $\Lambda_0$ be a compact subset 
of $\Lambda$ so that $\partial\Lambda_0\subset
\rho(F)$ and $\Lambda_0\cap \sigma(F) = \{\lambda_0\}$. Denote by $\nu(\Lambda_0,F)$ (or $\nu(\Lambda_0,F_n)$) the sum of algebraic multiplicities of all the eigenvalues
of $F(\cdot)$ (or $F_n(\cdot)$) in $\Lambda_0$.
Then by Theorems 2 and 3 of \cite{karma1996approximation}
and Theorem~2 of \cite{karma1996approximation2},
the following result holds.
\begin{theorem}\label{Thm1}
If $F_n(k)$ converges regularly to $F(k)$ for all $k\in \Lambda$,
then for sufficiently large $n$ and for each $\lambda_n\in \sigma(F_n)\cap\Lambda_0$,
it holds that $\nu(\Lambda_0,F_n) = \nu(\Lambda_0,F)$ and
\begin{equation*}
|\lambda_0 - \lambda_n|\leqslant C\sup_{\substack {x\in G(F,\lambda_0)\\
\Vert x\Vert = 1,\; k\in \partial\Lambda_0}}\Vert q_n F(k)x - F_n(k)p_n x\Vert^{1/\kappa}.
\end{equation*}
Here $\kappa$ is the order of the pole $\lambda_0$ of $F^{-1}(\cdot)$.
\end{theorem}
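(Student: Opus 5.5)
This theorem is quoted from the abstract theory of Karma (Theorems 2 and 3 of \cite{karma1996approximation} and Theorem~2 of \cite{karma1996approximation2}). The plan is to reconstruct the argument in three stages: stability away from the spectrum, an argument-principle count for the multiplicities, and a perturbation estimate built from the generalized eigenspace.

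\textbf{Stage 1: regular convergence implies stable invertibility.} Take $k\in\rho(F)$ and suppose $F_n(k)$ converges regularly to $F(k)$. Suppose the uniform inverse bound fails. Then there are $x_n$ with $\Vert x_n\Vert=1$ and $F_n(k)x_n\to 0$. A null sequence is trivially discrete compact, so regularity makes $\{x_n\}$ compact. Passing to a subsequence gives $x_n - p_n x\to 0$. Then
\[
q_nF(k)x=F_n(k)p_nx+o(1)\to 0,
\]
so $F(k)x=0$, while $\Vert x\Vert=1$. This contradicts $k\in\rho(F)$. Because the $F_n(k)$ are Fredholm of index zero, injectivity with a uniform bound gives invertibility with $\Vert F_n(k)^{-1}\Vert\le C$. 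Using the uniform bound $\Vert F_n\Vert\le C$ on compacta and holomorphy (equicontinuity via Cauchy estimates), the bound holds uniformly on the compact set $\partial\Lambda_0\subset\rho(F)$. In particular, $\sigma(F_n)\cap\partial\Lambda_0=\emptyset$ for large $n$.

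\textbf{Stage 2: equality of multiplicities.} Represent $\nu(\Lambda_0,F)$ and $\nu(\Lambda_0,F_n)$ through the generalized argument principle (Gohberg--Sigal), namely
\[
\nu(\Lambda_0,F)=\frac{1}{2\pi i}\,\mathrm{tr}\oint_{\partial\Lambda_0}F'(k)F(k)^{-1}\,dk,
\]
and the same for $F_n$. To make the trace well defined, first reduce to finite dimensions locally: near $\lambda_0$, write $F=F_0+K$ with $F_0$ invertible and $K$ of finite rank, via a Schmidt-type reduction. Then show that the discrete contour projections
\[
P_n=\frac{1}{2\pi i}\oint F_n^{-1}F_n'\,dk
\]
converge to the continuous projection $P$ in the sense of discrete convergence. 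Combining stability from Stage 1 with regular convergence shows that the images of the two projections have equal dimension.

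I expect this stage to be the main obstacle. The crude comparison $\Vert P_n p_n - p_n P\Vert\to 0$ need not hold in norm. Regularity has to be used a second time to obtain compactness of the sequences lying in $\mathrm{Ran}\,P_n$. That compactness is what yields both $\dim\mathrm{Ran}\,P_n\le\dim\mathrm{Ran}\,P$ and the reverse inequality.

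\textbf{Stage 3: the eigenvalue estimate.} Define
\[
\epsilon_n=\sup\Vert q_nF(k)x-F_n(k)p_nx\Vert
\]
over unit $x\in G(F,\lambda_0)$ and $k\in\partial\Lambda_0$. Build the finite-dimensional matrix functions $D(k)=\mathrm{restricted}\ F$ and $D_n(k)$ on the generalized eigenspace and its discrete counterpart. Then $\det D(k)$ has a zero of total order $\nu$ at $\lambda_0$, and $\det D_n(k)$ vanishes exactly at the $\lambda_n$. The perturbation $D_n-D$ is controlled by $\epsilon_n$ on $\partial\Lambda_0$, using stability to transfer the estimate to the interior via the maximum principle.

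The Jordan structure gives the key bound $\Vert D(k)^{-1}\Vert\sim|k-\lambda_0|^{-\kappa}$. This yields $|\lambda_n-\lambda_0|^{\kappa}\le C\epsilon_n$, which is the stated estimate.
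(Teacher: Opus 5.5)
The paper gives no proof of its own here; it imports the result from Karma's theorems, so your reconstruction has to stand on its own. Stage 1 is sound: regularity gives compactness, and Cauchy-estimate equicontinuity makes the inverse bound uniform on $\partial\Lambda_0$.

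\textbf{Stage 2 compares the wrong invariant.} For a nonlinear $F$ the logarithmic residue $\frac{1}{2\pi i}\oint_{\partial\Lambda_0}F(k)^{-1}F'(k)\,dk$ is not a projection, and $\nu(\Lambda_0,F)$ is its \emph{trace}, not the dimension of its range. Already for $X=Y=\mathbb{C}$ and $F(k)=(k-\lambda_0)^2$ the residue is multiplication by $2$. It has rank $1$ and trace $2=\nu$, while $\kappa=2$ and $\dim G(F,\lambda_0)=1<\nu$. So even a proof that $\dim\mathrm{Ran}\,P_n=\dim\mathrm{Ran}\,P$ would not give $\nu(\Lambda_0,F_n)=\nu(\Lambda_0,F)$. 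You need to compare traces, that is, zero counts of a scalar function obtained by reducing to a fixed finite-dimensional space. One option: write $F=(F+S)\bigl(I-(F+S)^{-1}S\bigr)$ with $S$ of finite rank and $F+S$ invertible near $\lambda_0$, and count zeros of $\det\bigl(I-(F(k)+S)^{-1}S\bigr)$. Then choose a discrete $S_n$ keeping $F_n+S_n$ regularly, hence stably, convergent, and finish with a Hurwitz argument.

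\textbf{Stage 3 asserts the key estimate without a mechanism.} The claim that $D_n-D$ is controlled by $\epsilon_n$ is the entire content of the estimate. ``$F$ restricted to the generalized eigenspace'' is not a square matrix function. Any reduction whose determinant vanishes exactly at the eigenvalues of $F_n$ (a Schur complement, or the determinant above) involves $F_n(k)$ on vectors outside $p_nG(F,\lambda_0)$, e.g.\ $(F(k)+S)^{-1}y$. There regular convergence gives convergence but no rate. Nor is there a commuting spectral projection, as in Osborn's linear argument, to carry discrete eigenvectors back to $G(F,\lambda_0)$.

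The missing idea is to correct $F_n$ only on $p_nG(F,\lambda_0)$. Take a uniformly bounded $L_n:X_n\to G(F,\lambda_0)$ with $L_np_nx=x$ for $x\in G(F,\lambda_0)$ (Hahn--Banach, finite dimension). Set $\Delta_n(k)=(q_nF(k)-F_n(k)p_n)L_n$ and $\hat F_n=F_n+\Delta_n$. Then $\|\Delta_n(k)\|\le C\epsilon_n$ on $\Lambda_0$ by the maximum principle, and $\hat F_n(k)p_nx=q_nF(k)x$ on $G(F,\lambda_0)$. Hence $p_n$ maps a canonical system of Jordan chains of $F$ at $\lambda_0$, with lengths $\kappa=m_1\ge\dots\ge m_p$ and $\sum m_i=\nu$, onto Jordan chains of $\hat F_n$ at $\lambda_0$.

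For large $n$, $\|F_n^{-1}\Delta_n\|<1$ on $\partial\Lambda_0$ by Stage 1 and $\epsilon_n\to0$. The Gohberg--Sigal Rouch\'e theorem and Stage 2 then give $\nu(\Lambda_0,\hat F_n)=\nu(\Lambda_0,F_n)=\nu$. Since the inherited chains already give $\lambda_0$ multiplicity at least $\nu$, $\lambda_0$ is the only eigenvalue of $\hat F_n$ in $\Lambda_0$, and $\hat F_n^{-1}$ has a pole of order $\kappa$ there.

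Stability on $\partial\Lambda_0$ and the maximum principle applied to $(k-\lambda_0)^\kappa\hat F_n(k)^{-1}$ now give $\|\hat F_n(k)^{-1}\|\le C|k-\lambda_0|^{-\kappa}$ uniformly in $n$. The Neumann series for $F_n=\hat F_n\bigl(I-\hat F_n^{-1}\Delta_n\bigr)$ then shows $F_n(k)$ is invertible unless $|k-\lambda_0|^\kappa\le C\epsilon_n$. This is where stability and the maximum principle are actually needed: on the inverse, not on the perturbation.
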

Let $E, H\in \mathcal{L}(X,Y)$ and $E_n, H_n \in \mathcal{L}(X_n,Y_n)$.
By Theorem 2.55 of \cite{vauinikko1976funktionalanalysis},
if $E_n$ converges stably to an invertible $E$ and $H_n$ converges compactly
to a compact $H$, then $E_n+H_n$ converges regularly to $E+H$.
In addition, if $H_n$ converges uniformly to a compact $H$,
then $H_n$ converges compactly to $H$ 
(see Lemma 3.4 of \cite{xi2024analysisfiniteelementdtn}).
Hence we have the following corollary.
\begin{corollary}\label{EH}
If for each $k\in \Lambda$, $F(k) = E(k) + H(k)$ and $F_n(k) = E_n(k) + H_n(k)$
such that $E(k)$ is invertible, $H(k)$ is compact, $E_n(k)$ converges stably to
$E(k)$ and $H_n(k)$ converges uniformly to $H(k)$, then the conclusion of
Theorem~\ref{Thm1} holds.
\end{corollary}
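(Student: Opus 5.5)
The corollary follows by combining the two cited facts and then invoking Theorem~\ref{Thm1}. The only hypothesis of Theorem~\ref{Thm1} left to check is that $F_n(k)$ converges regularly to $F(k)$ for every $k\in\Lambda$. The standing assumptions on $F$ and $F_n$ are already in force: holomorphy, the Fredholm property, nonempty $\rho(F)$, index zero of $F_n$, and local uniform boundedness.

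Fix $k\in\Lambda$. First I show that $H_n(k)$ converges compactly to $H(k)$. By hypothesis, $\Vert H_n(k)p_n-q_nH(k)\Vert\to0$ and $H(k)$ is compact, so Lemma~3.4 of \cite{xi2024analysisfiniteelementdtn} applies.

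To make this self-contained I would sketch the argument. Uniform convergence implies pointwise convergence, because $\Vert H_n p_n x-q_nHx\Vert\le\Vert H_np_n-q_nH\Vert\,\Vert x\Vert$. For discrete compactness, take $\Vert x_n\Vert\le1$ and write
\[
H_nx_n=q_nH\tilde x_n+(H_np_n-q_nH)\tilde x_n+H_n(x_n-p_n\tilde x_n),
\]
where $\tilde x_n\in X$ satisfies $p_n\tilde x_n\approx x_n$ with bounded norm. Compactness of $H$ then gives a convergent subsequence of $H\tilde x_n$. This lifting step is where the lemma genuinely uses the structure of the approximation scheme: in the FEM setting $p_n$ has a bounded right inverse, namely the inclusion $V_n\subset V$. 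I would simply cite the lemma rather than redo it.

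Second, $E_n(k)$ converges stably to the invertible operator $E(k)$ by hypothesis. Theorem~2.55 of \cite{vauinikko1976funktionalanalysis} then gives that $E_n(k)+H_n(k)$ converges regularly to $E(k)+H(k)$, that is, $F_n(k)$ converges regularly to $F(k)$.

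Since $k\in\Lambda$ was arbitrary, all hypotheses of Theorem~\ref{Thm1} hold, and its conclusion follows: equality of the algebraic multiplicities $\nu(\Lambda_0,F_n)=\nu(\Lambda_0,F)$ and the eigenvalue error bound. The only potential subtlety is the uniform-to-compact step, which is handled by the cited lemma.
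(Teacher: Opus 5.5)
Your proposal is correct and follows the paper's argument exactly. Lemma~3.4 of \cite{xi2024analysisfiniteelementdtn} upgrades uniform convergence to the compact $H(k)$ into compact convergence, Theorem~2.55 of \cite{vauinikko1976funktionalanalysis} then gives regular convergence of $E_n(k)+H_n(k)$ to $E(k)+H(k)$, and Theorem~\ref{Thm1} applies. Your caveat about the uniform-to-compact step is well taken: it needs a uniformly bounded right inverse of $p_n$, which the abstract statement leaves implicit. That condition does hold in both of the paper's applications, where $p_N$ is the identity on $V$ and the inclusion $V_n\subset V$ serves as the right inverse of the $V$-projection $p_n$.
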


We further assume that there exist $r_n\in \mathcal{L}(X_n,X)$ and $q'_n\in \mathcal{L}(Y,Y_n)$  such that
\begin{align}
&\Vert r_n\Vert \leqslant C,\; \forall n. \label{rnC}
\\
&\Vert r_np_n x - x\Vert\rightarrow 0,\; n\rightarrow \infty, \; \forall x\in G(F,\lambda_0). \label{rnpnx}
\\
&\Vert q'_n y - q_n y\Vert\rightarrow 0,\; n\rightarrow \infty,\; \forall y\in Y. \label{qny}
\end{align}

\begin{theorem}\label{KarmaThm3} (Theorem 3 of \cite{karma1996approximation2}) If $F_n(k) = q'_n F(k)r_n$ for all $k\in \Lambda$ with $q'_n$ and $r_n$ satisfying \eqref{rnC}-\eqref{qny}, then, for sufficiently large $n$ and each $\lambda_n \in \sigma(F_n)\cap \Lambda_0$, it holds that
\begin{equation*}
|\lambda_0 - \lambda_n|\leqslant C(d_n d_n^*)^{1/\kappa},
\end{equation*}
where
\begin{align*}
d_n  = \max_{\substack{x\in G(F,\lambda_0)\\ \Vert x\Vert = 1}} \textnormal{dist}(x,r_n X_n),\quad
d_n^* = \max_{\substack{g\in G(F^*,\overline{\lambda_0})\\ \Vert g\Vert = 1}}\textnormal{dist}(g,(q'_n)^*Y^*_n).
\end{align*}
\end{theorem}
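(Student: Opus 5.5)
This is Karma's Theorem 3, quoted from the literature. If I were to prove it, I would reduce it to Theorem~\ref{Thm1} and then sharpen the resulting bound into a product form by a duality argument. The first step is to verify the hypotheses of Theorem~\ref{Thm1} for $F_n(k)=q'_nF(k)r_n$. Condition \eqref{qny} gives $\Vert q'_n y\Vert\to\Vert y\Vert$. Using \eqref{rnC}, \eqref{rnpnx} and a density argument, I would show that $q'_nF(k)r_np_nx-q_nF(k)x\to0$, at least on the relevant eigenspace. Regular convergence needs more: it should follow from the Fredholm structure of $F$, in the spirit of Corollary~\ref{EH}, by splitting $F(k)$ into invertible plus compact parts; the original paper assumes this implicitly. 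Once this is done, $\nu(\Lambda_0,F_n)=\nu(\Lambda_0,F)$ for large $n$, and the discrete eigenvalues in $\Lambda_0$ are isolated and stable.

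The crude bound from Theorem~\ref{Thm1} only gives $\Vert q_nF(k)x-q'_nF(k)r_np_nx\Vert^{1/\kappa}$, which is of order $d_n^{1/\kappa}$. To obtain $(d_nd_n^*)^{1/\kappa}$ I would use the Keldysh/Laurent representation of $F^{-1}$ near $\lambda_0$. The principal part of $F(k)^{-1}$ is a finite-rank operator built from Jordan chains of $F$, from $G(F,\lambda_0)$, and of the adjoint $F^*$, from $G(F^*,\overline{\lambda_0})$.

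I would then express the characteristic quantity through the contour integrals
\[
\frac{1}{2\pi i}\int_{\partial\Lambda_0}(k-\lambda_0)^jF(k)^{-1}F'(k)\,dk
\]
and their discrete analogues. The perturbation of the trace (or determinant) of the discrete and continuous versions is a bilinear expression of the form $\langle g,\,F(k)(x-r_nw_n)\rangle$, with $x\in G(F,\lambda_0)$ and $g\in G(F^*,\overline{\lambda_0})$. Because the discretisation is a Galerkin-type projection, $\langle (q'_n)^*g_n,F(k)(x-r_nw_n)\rangle=0$ for any discrete $g_n$. Subtracting the best approximation $(q'_n)^*g_n$ of $g$ then bounds the bilinear form by $C\,\mathrm{dist}(x,r_nX_n)\,\mathrm{dist}(g,(q'_n)^*Y_n^*)$. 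This is the Galerkin orthogonality argument familiar from linear eigenvalue problems (Babuška–Osborn).

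Finally, I would form the polynomial $\prod(k-\lambda_{n,i})$ over the discrete eigenvalues in $\Lambda_0$, counted with multiplicity. Its deviation from $(k-\lambda_0)^{\nu}$ is controlled by these traces, so each $|\lambda_0-\lambda_{n,i}|^{\kappa}\le C d_nd_n^*$.

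I expect the main obstacle to be the duality step for nonlinear chains of length $\kappa>1$. The Jordan chain equations couple derivatives $F^{(j)}$, so the bilinear error terms involve $F^{(j)}(\lambda_0)$ acting on approximation errors of entire chains, not only of eigenvectors. I would handle this by approximating each chain element via a quasi-projection onto $r_nX_n$, and then controlling uniformly in $k\in\partial\Lambda_0$ using the uniform bound on $F_n(k)^{-1}$ along the contour.
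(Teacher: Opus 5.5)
The paper gives no proof of this statement; it is quoted from Karma. Your sketch therefore has to stand on its own, and it has a genuine gap exactly where the exponent $1/\kappa$ should appear.

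The traces of $\frac{1}{2\pi i}\int_{\partial\Lambda_0}(k-\lambda_0)^jF_n(k)^{-1}F_n'(k)\,dk$ are the power sums $p_j=\sum_i(\lambda_{n,i}-\lambda_0)^j$. Grant that $|p_j|\le C\,d_nd_n^*$ for $j=1,\dots,\nu$. Newton's identities then only make the coefficients of $\prod_i(k-\lambda_{n,i})-(k-\lambda_0)^\nu$ of size $O(d_nd_n^*)$. Evaluating at a root gives $|\lambda_{n,i}-\lambda_0|^{\nu}\le C\,d_nd_n^*$, where $\nu=\nu(\Lambda_0,F)$ is the algebraic multiplicity, not the pole order $\kappa\le\nu$.

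Symmetric functions of the eigenvalues cannot see the Jordan structure. Take a semisimple double eigenvalue ($\kappa=1$, $\nu=2$) with discrete eigenvalues $\lambda_0\pm\delta$. Then $p_1=0$ and $p_2=2\delta^2$, so your bounds give only $\delta=O((d_nd_n^*)^{1/2})$. That is $O(h_n)$, not the $O(h_n^2)$ that Theorem~\ref{BNn} asserts when $s_N=1$. So the obstacle is not confined to chains of length $\kappa>1$, as your last paragraph suggests; it appears whenever $\nu>\kappa$. What your trace argument does legitimately give is the mean-value estimate $|\lambda_0-\frac1\nu\sum_i\lambda_{n,i}|\le C\,d_nd_n^*$.

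To reach $1/\kappa$ you need an operator-level comparison with an object whose inverse has a pole of order exactly $\kappa$. One route:
reduce $F$ near $\lambda_0$ (by bordering/Schur complement, or by root polynomials built from a canonical system of Jordan chains) to a matrix function $S(k)$ with $\Vert S(k)^{-1}\Vert\le C|k-\lambda_0|^{-\kappa}$;
build the discrete counterpart $S_n(k)$;
show $\sup_{k\in\Lambda_0}\Vert S_n(k)-S(k)\Vert\le C\,d_nd_n^*$.
Then $S_n(\lambda_n)v=0$ gives $\Vert v\Vert\le\Vert S(\lambda_n)^{-1}\Vert\,\Vert (S-S_n)(\lambda_n)v\Vert$, i.e.\ $|\lambda_n-\lambda_0|^\kappa\le C\,d_nd_n^*$.

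Your Galerkin-orthogonality bound $|\langle g,F(k)(x-r_nw_n)\rangle|\le C\,\mathrm{dist}(x,r_nX_n)\,\mathrm{dist}(g,(q_n')^*Y_n^*)$ is the right ingredient. However, it must be established for every entry of such a matrix (all pairs of chain and adjoint-chain elements), in a way that only elements of $G(F,\lambda_0)$ and $G(F^*,\overline{\lambda_0})$ enter. You have only asserted that a trace difference reduces to terms of this form.

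Two smaller points.
\begin{itemize}
\item Regular convergence does not follow from \eqref{rnC}--\eqref{qny} plus an invertible-plus-compact splitting. Galerkin compressions $q_n'Er_n$ of an invertible $E$ need not be stable without coercivity or a discrete inf-sup condition. This is why the paper obtains stability of $A^N_n$ from the coercivity $(A^Nu,u)_1\ge\Vert u\Vert_1^2$.
\item Your density argument for $F_n\to F$ fails, because \eqref{rnpnx} is assumed only on $G(F,\lambda_0)$. Regular convergence has to remain a standing hypothesis, as in Theorem~\ref{Thm1}.
\end{itemize}
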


\subsection{Approximation of the Spectrum of $B^N_n(\cdot)$}
We apply the abstract spectral approximation theory to show the convergence of eigenvalues of $B^N_n(\cdot)$ to those of $B(\cdot)$. Assume that $\Lambda_0$ is a compact subset of $\Lambda$ so that $\partial\Lambda_0\subset\rho(B)$ and $\Lambda_0\cap \sigma(B)=\{\lambda_0\}$.

By the recurrence relation ${H^{(1)}_n}^{\prime}(z) = H^{(1)}_{n-1}(z) - nz^{-1}H^{(1)}_n(z)$ and the asymptotic formula of Hankel functions \cite{ernst1996finite}, we have that
\begin{equation}
\frac{k{H^{(1)}_n}^{\prime}(kR)}{H^{(1)}_n(kR)} = \frac{kH^{(1)}_{n-1}(kR)}{H^{(1)}_n(kR)} - \frac{n}{R},
\label{h}
\end{equation}
and
\begin{equation}
\left|\frac{kH^{(1)}_{n-1}(kR)}{H^{(1)}_n(kR)}\right|\leqslant C,\quad \left|\frac{k{H^{(1)}_n}^{\prime}(kR)}{H^{(1)}_n(kR)}\right|\leqslant C(1+n^2)^{1/2},\quad \forall n.
\label{hi}
\end{equation}
Similar to Theorems 3.1 and 3.3 of \cite{hsiao2011error}
for $k\in \Lambda$, we have the following estimate on $T^N(k)$.
\begin{lemma}\label{TTN}
$T(k)$ and $T^N(k)$, $k\in \Lambda$, are bounded from $H^{1/2}(\Gamma_R)$ to $H^{-1/2}(\Gamma_R)$. Furthermore,
\begin{equation*}
\Vert T(k)u - T^N(k)u\Vert_{H^{-1/2}(\Gamma_R)}\leqslant CN^{-s}\gamma_{1/2+s}(N,u)\Vert u\Vert_{H^{1+s}(\Omega_R)},
\end{equation*}
where $\gamma_{1/2+s}(N,u)\in [0,1]$, $\gamma_{1/2+s}(N,u)\rightarrow 0$ when $N\rightarrow \infty$, and $C$ does not depend on $N$ or $u$.
\end{lemma}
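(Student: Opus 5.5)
The plan is to work entirely in Fourier coefficients on $\Gamma_R$, using the standard Fourier characterization of Sobolev norms on the circle:
\[
\Vert u\Vert^2_{H^{t}(\Gamma_R)}\simeq \sum_{n\ge 0}(1+n^2)^{t}\bigl(|a_n(u)|^2+|b_n(u)|^2\bigr),\qquad t\in\mathbb{R},
\]
with equivalence constants depending only on $R$ and $t$. In these coordinates $T(k)$ and $T^N(k)$ are diagonal multipliers. On the $n$-th mode they multiply by $m_n(k):=k{H^{(1)}_n}'(kR)/H^{(1)}_n(kR)$, and $T^N(k)$ uses $m_n(k)$ only for $n\le N$ and $0$ otherwise. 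Since $k\in\Lambda$ excludes the zeros of every $H^{(1)}_n$ and the branch cut, each $m_n(k)$ is well defined and finite.

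\textbf{Boundedness.} By \eqref{hi}, $|m_n(k)|\le C(1+n^2)^{1/2}$ uniformly in $n$. Hence
\[
\Vert T(k)u\Vert^2_{H^{-1/2}(\Gamma_R)}\le C\sum_n (1+n^2)^{-1/2}(1+n^2)\bigl(|a_n|^2+|b_n|^2\bigr)= C\Vert u\Vert^2_{H^{1/2}(\Gamma_R)},
\]
and the same bound holds for $T^N(k)$ with a constant independent of $N$. This is the first claim of Lemma~\ref{TTN}.

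\textbf{Truncation error.} The difference $T(k)u-T^N(k)u$ keeps only the modes $n>N$. Therefore
\[
\Vert (T-T^N)u\Vert^2_{H^{-1/2}}\le C\sum_{n>N}(1+n^2)^{1/2}\bigl(|a_n|^2+|b_n|^2\bigr).
\]
For $n>N$ we factor out the decay, writing $(1+n^2)^{1/2}=(1+n^2)^{-s}(1+n^2)^{1/2+s}\le CN^{-2s}(1+n^2)^{1/2+s}$. Define
\[
\gamma_{1/2+s}(N,u):=\Bigl(\sum_{n>N}(1+n^2)^{1/2+s}(|a_n|^2+|b_n|^2)\Big/\sum_{n\ge0}(1+n^2)^{1/2+s}(|a_n|^2+|b_n|^2)\Bigr)^{1/2},
\]
and set it to $0$ when the denominator vanishes. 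This quantity lies in $[0,1]$ and tends to $0$ as $N\to\infty$, because it is the normalized tail of a convergent series; that series converges because $u|_{\Gamma_R}\in H^{1/2+s}(\Gamma_R)$. Combining these gives
\[
\Vert (T-T^N)u\Vert_{H^{-1/2}}\le CN^{-s}\gamma_{1/2+s}(N,u)\Vert u\Vert_{H^{1/2+s}(\Gamma_R)}.
\]
Finally, the trace theorem $\Vert u\Vert_{H^{1/2+s}(\Gamma_R)}\le C\Vert u\Vert_{H^{1+s}(\Omega_R)}$ (valid for $s\ge0$, with $\Gamma_R$ smooth) yields the stated estimate.

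\textbf{Main obstacle.} The only nontrivial input is the uniform-in-$n$ multiplier bound \eqref{hi} for complex $k$, which is taken as given from the recurrence \eqref{h} and the Hankel asymptotics. This bound must hold uniformly for $n$ large, where $H_{n-1}^{(1)}(z)/H_n^{(1)}(z)\sim z/(2n)$, so the ratio stays bounded. Finitely many small $n$ are harmless because $k\in\Lambda$ keeps $H^{(1)}_n(kR)\neq0$. The constants depend on $k$, locally uniformly on compact subsets of $\Lambda$, but not on $N$ or $u$, as the lemma requires.
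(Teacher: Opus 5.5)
Your proposal is correct and follows essentially the same route as the paper. Both arguments use the Fourier-multiplier bound from \eqref{hi} for boundedness, keep only the modes $n>N$ with $\gamma_{1/2+s}(N,u)$ defined as the normalized $H^{1/2+s}(\Gamma_R)$ tail, and finish with the trace theorem. The paper phrases the tail step through the $L^2$-projection $\pi^N$, writing $T^N(k)u=T(k)\pi^N u$ and deriving estimate \eqref{pi}, while you inline the same computation; the only gain from the paper's packaging is that \eqref{pi} is reused later in Theorem~\ref{BN}.
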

\begin{proof}
Recall that 
the norm on $H^s(\Gamma_R)$ is defined by 
\begin{equation*}
\Vert u\Vert_s^2 = \sum_{n=0}^{+\infty}{\!}^{{}^{\scriptstyle{\prime}}} (1+n^2)^s(|a_n(u)|^2+|b_n(u)|^2).
\end{equation*}
Hence by the estimate \eqref{hi}, 
\begin{eqnarray}
\Vert T(k)u\Vert_{H^{-1/2}(\Gamma_R)}^2 &=& \sum_{n=0}^{+\infty}{\!}^{{}^{\scriptstyle{\prime}}} (1+n^2)^{-1/2}\left|\frac{k{H^{(1)}_n}^{\prime}(kR)}{H^{(1)}_n(kR)}\right|^2(|a_n(u)|^2+|b_n(u)|^2) \nonumber \\
&\leqslant& C\Vert u\Vert_{H^{1/2}(\Gamma_R)}^2.
\label{C1}
\end{eqnarray}
Moreover 
\[\Vert T^N(k)u\Vert_{H^{-1/2}(\Gamma_R)}\leqslant \Vert T(k)u\Vert_{H^{-1/2}(\Gamma_R)}\leqslant C\Vert u\Vert_{H^{1/2}(\Gamma_R)}.
\]
Let $U^N$ be the linear 
span of $\cos(n\phi)$ and $\sin(n\phi)$, $n = 0,1,\dots,N$.
Define $\pi^N : L^2(\Gamma_R)\rightarrow U^N$ as the $L^2(\Gamma_R)$-projection.
It holds that $T^N(k)u = T(k)\pi^N u$. For $r\geqslant t$
\begin{eqnarray*}
\Vert u - \pi^Nu\Vert_{H^{t}(\Gamma_R)}^2 &=& \sum_{n=N+1}^{+\infty}(1+n^2)^t(|a_n(u)|^2+|b_n(u)|^2) \\
&\leqslant& N^{-2r+2t}\Vert u - \pi^N u\Vert_{H^{r}(\Gamma_R)}^2.
\end{eqnarray*}
Denoting $\gamma_{r}(N,u) = \Vert u - \pi^N u\Vert_{H^{r}(\Gamma_R)}/\Vert u\Vert_{H^{r}(\Gamma_R)}$, we have that
\begin{equation}
\Vert u - \pi^N u\Vert_{H^t(\Gamma_R)} \leqslant N^{-r+t}\gamma_r(N,u)\Vert u\Vert_{H^r(\Gamma_R)}.
\label{pi}
\end{equation}
Therefore, by the boundedness of $T(k)$ and the trace inequality, it holds
for $r = 1/2+s$, $s>0$ that
\begin{eqnarray}
&&\Vert T(k)u - T^N(k)u\Vert_{H^{-1/2}(\Gamma_R)} \leqslant C\Vert u-\pi^Nu\Vert_{H^{1/2}(\Gamma_R)} \nonumber
\\ &\leqslant& CN^{-s}\gamma_{1/2+s}(N,u)\Vert u\Vert_{H^{1/2+s}(\Gamma_R)} \leqslant CN^{-s}\gamma_{1/2+s}(N,u)\Vert u\Vert_{H^{1+s}(\Omega_R)}.
\label{C2}
\end{eqnarray}
This completes the proof.
\end{proof}


Next we consider the approximation of $B^N(\cdot)$ to $B(\cdot)$.
Take $X = Y = X_N = Y_N = V$, and let the operators $p_N : V\rightarrow V$
and $q_N : V\rightarrow V$ be identities.

By the equality \eqref{h},
we write $B(k) = A - K(k)$, where $A:V\rightarrow V$ and $K(k):V\rightarrow V$ are
defined respectively by
\[
(Au,v)_1 = (\nabla u,\nabla v) + (u,v) + 
\pi\sum_{n=0}^{+\infty}{\!}^{{}^{\scriptstyle{\prime}}} n(a_n(u)\overline{a_n(v)} + b_n(u)\overline{b_n(v)})
\]
for all $u, v\in V$ and
\[
(K(k)u,v)_1 = ((k^2n(x)+1)u,v) + \pi R\sum_{n=0}^{+\infty}{\!}^{{}^{\scriptstyle{\prime}}}
\frac{kH^{(1)}_{n-1}(kR)}{H^{(1)}_n(kR)}(a_n(u)\overline{a_n(v)} + b_n(u)\overline{b_n(v)}).
\]
for all $u, v\in V$.
Define $A^N: V\rightarrow V$ and $K^N(k): V\rightarrow V$ respectively by truncating the series to the $N$-th term such that
\[
(A^Nu,v)_1 = (\nabla u,\nabla v) + (u,v) + 
\pi\sum_{n=0}^{N}{\!}^{{}^{\scriptstyle{\prime}}} n(a_n(u)\overline{a_n(v)} + b_n(u)\overline{b_n(v)})
\]
for all $u, v\in V$ and
\[
(K^N(k)u,v)_1 = ((k^2n(x)+1)u,v) + \pi R\sum_{n=0}^{N}{\!}^{{}^{\scriptstyle{\prime}}}
\frac{kH^{(1)}_{n-1}(kR)}{H^{(1)}_n(kR)}(a_n(u)\overline{a_n(v)} + b_n(u)\overline{b_n(v)})
\]
for all $u, v\in V$.
Consequently,  $B^N(k) = A^N - K^N(k)$.
\begin{lemma}\label{LemmaBhF}
The operator function $B(\cdot):\Lambda\rightarrow \mathcal{L}(V)$ is holomorphic 
Fredholm with index zero. The resolvent set of $B(\cdot)$ is non-empty.
\end{lemma}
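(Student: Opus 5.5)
We will use the splitting $B(k)=A-K(k)$ introduced just above the statement. The plan is to show that $A$ is invertible, that $K(k)$ is compact and depends holomorphically on $k\in\Lambda$, and that $B(k)$ is invertible for at least one $k\in\Lambda$.

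\emph{Step 1: $A$ is coercive, hence invertible.} For $u\in V$,
\[
(Au,u)_1=\Vert u\Vert_{H^1(\Omega_R)}^2+\pi\sum_{n=0}^{+\infty}{\!}^{{}^{\scriptstyle{\prime}}} n\,(|a_n(u)|^2+|b_n(u)|^2)\geqslant \Vert u\Vert_{H^1(\Omega_R)}^2 .
\]
Boundedness of $A$ follows from the trace theorem, since the series is bounded by $C\Vert u\Vert_{H^{1/2}(\Gamma_R)}\Vert v\Vert_{H^{1/2}(\Gamma_R)}$. Lax--Milgram then gives that $A$ is invertible.

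\emph{Step 2: $K(k)$ is compact and holomorphic.} We split $K(k)=K_1(k)+K_2(k)$.
\begin{itemize}
\item The volume part $K_1(k)$, defined by $(K_1(k)u,v)_1=((k^2n+1)u,v)$, satisfies $\Vert K_1(k)u\Vert_1\leqslant C\Vert u\Vert_{L^2(\Omega_R)}$. It is therefore compact by the compact embedding $H^1(\Omega_R)\hookrightarrow L^2(\Omega_R)$.
\item For the boundary part $K_2(k)$, the bound \eqref{hi} gives $|kH^{(1)}_{n-1}(kR)/H^{(1)}_n(kR)|\leqslant C$. Hence $|(K_2(k)u,v)_1|\leqslant C\Vert u\Vert_{L^2(\Gamma_R)}\Vert v\Vert_{L^2(\Gamma_R)}$. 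Since the trace $H^1(\Omega_R)\to L^2(\Gamma_R)$ is compact (it factors through $H^{1/2}(\Gamma_R)\hookrightarrow L^2(\Gamma_R)$), $K_2(k)$ is compact.
\end{itemize}
Alternatively, $K_2(k)$ is the uniform limit of the finite-rank truncations $K^N(k)-K_1(k)$. The tail is bounded by $C\sum_{n>N}(|a_n(u)|^2+|b_n(u)|^2)\leqslant CN^{-1}\Vert u\Vert_{H^{1/2}(\Gamma_R)}^2$, which gives compactness as well.

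For holomorphy, each coefficient $kH^{(1)}_{n-1}(kR)/H^{(1)}_n(kR)$ is holomorphic on $\Lambda$, because there are no zeros of $H^{(1)}_n$ and the branch cut is excluded. Holomorphy of $K(k)$ in operator norm then follows from locally uniform convergence of the truncations, since the bound \eqref{hi} is uniform on compact subsets of $\Lambda$ (Weierstrass). The main obstacle is exactly this step: one must check that the constant in \eqref{hi} is locally uniform in $k$. We would appeal to the uniform asymptotics $H^{(1)}_{n-1}(z)/H^{(1)}_n(z)\sim z/(2n)$ as $n\to\infty$, uniformly for $z$ in compact sets.

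Consequently $B(k)=A-K(k)$ is invertible plus compact, so it is Fredholm with index zero, and $B(\cdot)$ is holomorphic on $\Lambda$.

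\emph{Step 3: the resolvent set is non-empty.} Take $k>0$ real. By Fredholm index zero it suffices to show injectivity. If $B(k)u=0$, then $u$ solves \eqref{uR1}--\eqref{uR2} with $g=0$. Extending $u$ to $\Omega_R^c$ by its outgoing Hankel series, compatible via the DtN map, gives a radiating solution on $\mathbb{R}^2$ with zero source. The uniqueness result (Lemma 2.2 of \cite{moiola2019acoustic}) for $\text{Im}(k)\geqslant0$, $k\neq0$ then yields $u=0$. More directly, taking the imaginary part of $(B(k)u,u)_1=0$ and using $\text{Im}\big(kH^{(1)\prime}_n(kR)/H^{(1)}_n(kR)\big)>0$ for real $k>0$ (a Wronskian identity) forces $a_n(u)=b_n(u)=0$. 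So $u=\partial_\nu u=0$ on $\Gamma_R$, and unique continuation gives $u\equiv0$. Hence $k\in\rho(B)$.
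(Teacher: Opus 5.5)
Your proposal is correct and takes essentially the same route as the paper: the splitting $B(k)=A-K(k)$, coercivity of $A$, compactness of $K(k)$ from the bound \eqref{hi} plus a compact embedding, and uniqueness at real $k>0$ to produce a point of $\rho(B)$. The differences are minor. You use compactness of the trace $H^1(\Omega_R)\to L^2(\Gamma_R)$ where the paper uses the bounded extension of $K(k)$ to $H^{1/2+\epsilon}(\Omega_R)$. You also obtain holomorphy as a locally uniform limit of the holomorphic truncations rather than by differentiating $T(k)$ termwise, which usefully makes explicit that the constant in \eqref{hi} must be locally uniform in $k$.
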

\begin{proof}
Let $k\in \Lambda$.
Taking the derivative of \eqref{h} and making use of the recurrence formula and the estimate
\eqref{hi}, we have that
$T'(k) : H^{1/2}(\Gamma_R)\rightarrow H^{-1/2}(\Gamma_R)$ is bounded.
Then taking the derivative of \eqref{B}, we see that $B(\cdot) : \Lambda\rightarrow \mathcal{L}(V)$
is holomorphic with $B'(\cdot)$ satisfying
\begin{equation*}
(B'(k)u,v)_1 = -(2kn(x)u,v) - \langle T'(k)u,v\rangle \quad \forall v\in V.
\end{equation*}
Inequality \eqref{hi} implies that $A$ is bounded and the extension $K(k) : H^{1/2+\epsilon}(\Omega_R)\rightarrow V$ is also bounded. Thus $K(k)$ itself is compact.
Moreover, since $(Au,u)_1\geqslant \Vert u\Vert_1^2$ for all $u\in V$, $A$ is invertible.
Therefore $B(k)$ is Fredholm of index zero.
The uniqueness of the scattering problem for $\text{Im}(k)\ge 0$ implies that
$\rho(B)$ is non-empty.
\end{proof}

\begin{lemma}\label{BNhF}
The operator functions 
$B^N(\cdot) : \Lambda\rightarrow \mathcal{L}(V)$ are holomorphic Fredholm with index zero,
and for each compact $\Lambda_1\subset \Lambda$,
$\Vert B^N(k)\Vert\leqslant C$ for all $N$ and for all $k\in \Lambda_1$.
\end{lemma}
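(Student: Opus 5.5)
The plan is to copy the proof of Lemma~\ref{LemmaBhF}, using the splitting $B^N(k) = A^N - K^N(k)$. Every constant will be tracked so that it does not depend on $N$.

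\textbf{Holomorphy.} For fixed $N$, $T^N(k)$ is a finite sum of terms $\frac{k{H^{(1)}_n}'(kR)}{H^{(1)}_n(kR)}$ times the bounded rank-two maps $u\mapsto a_n(u)\cos(n\theta)+b_n(u)\sin(n\theta)$. Each coefficient is holomorphic on $\Lambda$, since $H^{(1)}_n(kR)\neq 0$ there and the branch cut lies in $\mathbb{R}^-$. Hence $k\mapsto T^N(k)\in\mathcal{L}(H^{1/2}(\Gamma_R),H^{-1/2}(\Gamma_R))$ is holomorphic. The term $-(k^2 n(x)u,v)$ is a polynomial in $k$ with a bounded coefficient operator. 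Together with the trace theorem and the Riesz representation theorem, this gives that $B^N(\cdot):\Lambda\to\mathcal{L}(V)$ is holomorphic. Alternatively one can write $T^N(k)=T(k)\pi^N$ and inherit holomorphy from $T(\cdot)$.

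\textbf{Fredholm with index zero.} The form defining $A^N$ satisfies $(A^Nu,u)_1\geqslant \Vert u\Vert_1^2$, because the truncated series term is nonnegative. By Lax--Milgram, $A^N$ is therefore invertible with $\Vert (A^N)^{-1}\Vert\leqslant 1$, uniformly in $N$. Boundedness of $A^N$ follows from
\[
\pi\sum_{n=0}^{N}{\!}^{{}^{\scriptstyle{\prime}}} n(|a_n(u)|^2+|b_n(u)|^2)\leqslant C\Vert u\Vert_{H^{1/2}(\Gamma_R)}^2\leqslant C\Vert u\Vert_1^2
\]
together with Cauchy--Schwarz.

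For $K^N(k)$, use \eqref{hi}: $|kH^{(1)}_{n-1}(kR)/H^{(1)}_n(kR)|\leqslant C$. Then the series part is bounded by $C\Vert u\Vert_{L^2(\Gamma_R)}\Vert v\Vert_{L^2(\Gamma_R)}$, and the volume part by $C\Vert u\Vert_{L^2(\Omega_R)}\Vert v\Vert_{L^2(\Omega_R)}$. So $K^N(k)$ extends to a bounded map from $H^{1/2+\epsilon}(\Omega_R)$ to $V$, using the trace into $L^2(\Gamma_R)$. Since $V$ embeds compactly into $H^{1/2+\epsilon}(\Omega_R)$, $K^N(k)$ is compact. (Being finite-rank in the boundary part also suffices.) Therefore $B^N(k)$ is an invertible operator plus a compact one, hence Fredholm with index zero.

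\textbf{Uniform bound.} This is the only step that needs care. Let $\Lambda_1\subset\Lambda$ be compact. The constant in \eqref{hi} has to be uniform in $k\in\Lambda_1$ as well as in $n$. This holds because each ratio is continuous in $k$ on $\Lambda$, and the large-$n$ asymptotics $H^{(1)}_{n-1}(z)/H^{(1)}_n(z)\sim z/(2n)$ are uniform for $z$ in compact sets. Also $|k^2 n(x)|\leqslant C$ on $\Lambda_1$. Combining these,
\[
|(B^N(k)u,v)_1|\leqslant C\Vert u\Vert_1\Vert v\Vert_1,
\]
with $C$ independent of $N$ and of $k\in\Lambda_1$. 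Equivalently, $\Vert T^N(k)u\Vert_{H^{-1/2}}\leqslant\Vert T(k)u\Vert_{H^{-1/2}}\leqslant C\Vert u\Vert_{H^{1/2}}$, as in the proof of Lemma~\ref{TTN}. This gives $\Vert B^N(k)\Vert\leqslant C$.
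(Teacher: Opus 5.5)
Your proof is correct and follows the paper's argument. Holomorphy is obtained as in Lemma~\ref{LemmaBhF}, Fredholmness of index zero comes from the splitting $B^N(k)=A^N-K^N(k)$ with $A^N$ coercive and $K^N(k)$ compact, and the $N$-independent bound comes from $\Vert T^N(k)u\Vert_{H^{-1/2}(\Gamma_R)}\leqslant \Vert T(k)u\Vert_{H^{-1/2}(\Gamma_R)}\leqslant C\Vert u\Vert_{H^{1/2}(\Gamma_R)}$. You also explain why the constant in \eqref{hi} is uniform over $k\in\Lambda_1$ (continuity for finitely many $n$ plus uniform large-$n$ asymptotics), which the paper only asserts.
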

\begin{proof}
Similar to the proof of Lemma~\ref{LemmaBhF}, it can be shown that $B^N(\cdot):\Lambda\rightarrow V$ is holomorphic. By Lemma~\ref{TTN},
\begin{eqnarray*}
\Vert B^N(k)u\Vert_1 &\leqslant& C\Vert u\Vert_1 + \Vert T^N(k)u\Vert_{H^{-1/2}(\Gamma_R)} \\
&\leqslant& C\Vert u\Vert_1 + C\Vert u\Vert_{H^{1/2}(\Gamma_R)} \\
&\leqslant& C\Vert u\Vert_1,
\end{eqnarray*}
where $C$ is independent of $N$. Furthermore, if $k\in \Lambda_1$,
then $C$ can be chosen to be independent of $k$.
Again, similar to Lemma~\ref{LemmaBhF}, we can show that $A^N$ is bounded and $K^N(k)$
is compact. Moreover, $(A^Nu,u)_1\geqslant \Vert u\Vert_1^2$ for all $u\in V$.
Thus $A^N$ is invertible. Therefore, $B^N(k)$ is Fredholm of index zero for $k\in \Lambda$.
\end{proof}

\begin{theorem}\label{BN}
Assume that $G(B,\lambda_0)\subset H^{1+s}(\Omega_R)$ for some
$s>0$. For sufficiently large $N$ and for each
$\lambda^N\in \sigma(B^N)\cap \Lambda_0$, 
$\nu(\Lambda_0,B) = \nu(\Lambda_0,B^N)$, and
\begin{equation*}
|\lambda_0 - \lambda^N|\leqslant CN^{-s/\kappa},
\end{equation*}
where $\kappa$ is the order of the pole $\lambda_0$ of $B^{-1}(\cdot)$.
\end{theorem}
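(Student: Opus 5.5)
We apply Corollary~\ref{EH} with $X=Y=X_N=Y_N=V$ and $p_N=q_N$ the identities. We use the splittings $B(k)=A-K(k)$ and $B^N(k)=A^N-K^N(k)$, taking $E(k)=A$, $H(k)=-K(k)$, $E_N=A^N$ and $H_N(k)=-K^N(k)$. By Lemma~\ref{LemmaBhF}, $A$ is invertible and $K(k)$ is compact. By Lemma~\ref{BNhF}, the $B^N(\cdot)$ are holomorphic Fredholm with index zero and uniformly bounded on compact sets, and $\rho(B)\neq\emptyset$. These are the standing hypotheses of Theorem~\ref{Thm1}. Two further checks remain: $A^N$ must converge stably to $A$, and $K^N(k)$ must converge uniformly to $K(k)$. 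The eigenvalue estimate then follows from the bound in Theorem~\ref{Thm1} combined with Lemma~\ref{TTN}.

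\textbf{Stable convergence of $A^N$.} Since $(A^Nu,u)_1\geqslant\Vert u\Vert_1^2$, Lax--Milgram gives $\Vert (A^N)^{-1}\Vert\leqslant 1$ uniformly in $N$. For pointwise convergence, the only difference from $A$ is the tail of the series:
\[
((A-A^N)u,v)_1=\pi\sum_{n>N} n\bigl(a_n(u)\overline{a_n(v)}+b_n(u)\overline{b_n(v)}\bigr).
\]
By Cauchy--Schwarz this is bounded by $C\Vert u-\pi^Nu\Vert_{H^{1/2}(\Gamma_R)}\Vert v\Vert_{H^{1/2}(\Gamma_R)}$. Hence $\Vert (A-A^N)u\Vert_1\leqslant C\Vert u-\pi^Nu\Vert_{H^{1/2}(\Gamma_R)}\to0$ for each fixed $u\in V$, because the trace of $u$ lies in $H^{1/2}(\Gamma_R)$. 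This gives pointwise convergence only, which is all that stability requires.

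\textbf{Uniform convergence of $K^N(k)$.} This is the main obstacle. The tail is
\[
((K-K^N)(k)u,v)_1=\pi R\sum_{n>N}\frac{kH^{(1)}_{n-1}(kR)}{H^{(1)}_n(kR)}\bigl(a_n(u)\overline{a_n(v)}+b_n(u)\overline{b_n(v)}\bigr).
\]
By the first bound in \eqref{hi} the coefficients are bounded by $C$, so the tail is at most
\[
C\Bigl(\sum_{n>N}|a_n(u)|^2+|b_n(u)|^2\Bigr)^{1/2}\Bigl(\sum_{n>N}|a_n(v)|^2+|b_n(v)|^2\Bigr)^{1/2}.
\]
Each factor gains a power of $N$ relative to the $H^{1/2}$ norm: $\Vert w-\pi^Nw\Vert_{L^2(\Gamma_R)}\leqslant N^{-1/2}\Vert w\Vert_{H^{1/2}(\Gamma_R)}$. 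Using the trace inequality, this gives $\Vert K(k)-K^N(k)\Vert\leqslant CN^{-1}\to0$, uniformly in the operator norm. Corollary~\ref{EH} then applies, which yields $\nu(\Lambda_0,B^N)=\nu(\Lambda_0,B)$ for large $N$ and the bound of Theorem~\ref{Thm1}.

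\textbf{Rate.} With $p_N$ and $q_N$ the identities, the bound of Theorem~\ref{Thm1} reads
\[
|\lambda_0-\lambda^N|\leqslant C\sup_{x,k}\Vert (B(k)-B^N(k))x\Vert_1^{1/\kappa}.
\]
By the definitions, $((B(k)-B^N(k))x,v)_1=-\langle (T(k)-T^N(k))x,v\rangle$. The trace theorem then gives $\Vert (B(k)-B^N(k))x\Vert_1\leqslant C\Vert (T(k)-T^N(k))x\Vert_{H^{-1/2}(\Gamma_R)}$. Lemma~\ref{TTN} bounds this by $CN^{-s}\Vert x\Vert_{H^{1+s}(\Omega_R)}$ for $x\in G(B,\lambda_0)\subset H^{1+s}(\Omega_R)$, using $\gamma\leqslant1$. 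The constant in \eqref{hi} can be taken uniform for $k$ in the compact set $\partial\Lambda_0\subset\Lambda$. Since $G(B,\lambda_0)$ is finite dimensional, all norms on it are equivalent, so $\Vert x\Vert_{H^{1+s}}\leqslant C$ whenever $\Vert x\Vert_1=1$. Taking the supremum gives $|\lambda_0-\lambda^N|\leqslant CN^{-s/\kappa}$.
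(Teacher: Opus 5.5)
Your proof is correct and follows the paper's argument essentially step for step: stable convergence of $A^N$ via coercivity and the $H^{1/2}$ tail, uniform convergence of $K^N(k)$ at rate $N^{-1}$, then Corollary~\ref{EH}, and finally Lemma~\ref{TTN} together with the finite dimensionality of $G(B,\lambda_0)$ for the rate. The only differences are cosmetic: you bound the $K$-tail as an $L^2$--$L^2$ pairing, gaining $N^{-1/2}$ from each factor, where the paper uses an $H^{-1/2}$--$H^{1/2}$ pairing; and you state explicitly that the constants are uniform in $k$ over the compact set $\partial\Lambda_0$, which the paper leaves implicit.
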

\begin{proof}
Let $k\in \Lambda$.
By the definition of $A$ and $A^N$, and \eqref{pi}, we have that
\begin{align*}
|(Au,v)_1 - (A^Nu,v)_1| & = \left|\pi\sum_{n=N+1}^{+\infty}n(a_n(u)\overline{a_n(v)} + b_n(u)\overline{b_n(v)})\right|
\\
& \leqslant C\Vert u-\pi^N u\Vert_{H^{1/2}(\Gamma_R)}
\Vert v \Vert_{H^{1/2}(\Gamma_R)} \\ 
&\leqslant C\gamma_{1/2}(N,u)\Vert u\Vert_{H^{1/2}(\Gamma_R)}\Vert v\Vert_{H^{1/2}(\Gamma_R)} \\ & \leqslant C\gamma_{1/2}(N,u)\Vert u\Vert_1\Vert v\Vert_1.
\end{align*}
Hence $\Vert Au - A^N u\Vert_1\leqslant \gamma_{1/2}(N,u)\Vert u\Vert_1\rightarrow 0$ as $N\rightarrow\infty$ for all $u\in V$. Moreover,  $(A^Nu,u)_1\geqslant \Vert u\Vert_1^2$ for all $u\in V$, thus $A^N$ are invertible with $\Vert (A^N)^{-1}\Vert \leqslant 1$ for all $N$. Therefore, $A^N$ converges stably to $A$. On the other hand, by the definition of $K(k)$ and $K^N(k)$, and the inequalities \eqref{hi} and \eqref{pi}, we have that
\begin{align*}
|(K(k)u,v)_1 - (K^N(k)u,v)_1| & = \left|\pi R\sum_{n=N+1}^{+\infty}\frac{kH^{(1)}_{n-1}(kR)}{H^{(1)}_n(kR)}(a_n(u)\overline{a_n(v)} + b_n(u)\overline{b_n(v)})\right| 
\\
& \leqslant
C\Vert u - \pi^N u\Vert_{H^{-1/2}(\Gamma_R)}\Vert v\Vert_{H^{1/2}(\Gamma_R)} \\
& \leqslant CN^{-1}\Vert u\Vert_{H^{1/2}(\Gamma_R)}\Vert v\Vert_{H^{1/2}(\Gamma_R)} \\
&\leqslant CN^{-1}\Vert u\Vert_1\Vert v\Vert_1.
\end{align*}
Hence $\Vert K(k)u - K^N(k)u\Vert_1\leqslant CN^{-1}\Vert u\Vert_1$
for all $u\in V$. Thus $K^N(k)$ converges uniformly to $K(k)$.
Due to Corollary~\ref{EH}, for sufficiently large $N$ and $\lambda^N\in \sigma(B^N)\cap \Lambda_0$, it holds that $\nu(\Lambda_0,B) = \nu(\Lambda_0,B^N)$ and
\begin{equation*}
|\lambda_0 - \lambda^N|\leqslant C\sup_{\substack {u\in G(B,\lambda_0)\\
\Vert u\Vert_1 = 1,\; k\in \partial\Lambda_0}}\Vert B(k)u - B^N(k)u\Vert_1^{1/\kappa}.
\end{equation*}
By definition,
\begin{align*}
|(B(k)u,v)_1-(B^N(k)u,v)_1| & = |\langle T(k)u - T^N(k)u,v\rangle|
\\ &\leqslant \Vert T(k)u - T^N(k)u\Vert_{H^{-1/2}(\Gamma_R)}\Vert v\Vert_{H^{1/2}(\Gamma_R)}.
\end{align*}
Therefore, by the trace inequality, Lemma 1 and 
that $G(B,\lambda_0)$ is finite dimensional, we have
\begin{equation*}
|\lambda_0 - \lambda^N|\leqslant C\sup_{\substack {u\in G(B,\lambda_0)\\
\Vert u\Vert_1 = 1,\; \lambda\in \partial\Lambda_0}}\Vert T(k)u - T^N(k)u\Vert_{H^{-1/2}(\Gamma_R)}^{1/\kappa}\leqslant CN^{-s/\kappa}.
\end{equation*}
The proof is complete.
\end{proof}

Now we consider the approximation of $B^N_n(\cdot)$ to $B^N(\cdot)$ as $n\rightarrow \infty$.
Let $X = Y = V$, $X_n = Y_n = V_n \subset V$, and take $p_n: V\rightarrow V_n$
as the $V$-projection, and $q_n = p_n$.
We split $B^N_n(k)$ in the same way as $B^N(k)$.
By the definition of $B^N_n(k)$, it holds that $(B^N_n(k)u_n,v_n)_1 = (B^N(k)u_n,v_n)_1$
for all $u_n, v_n\in V_n$.
Define $A^N_n:V_n\rightarrow V_n$ and $K^N_n(k):V_n\rightarrow V_n$ by
\[
(A^N_n u_n,v_n)_1 = (A^N u_n,v_n)_1 \quad \forall u_n, v_n\in V_n
\]
and
\[
(K^N_n(k) u_n,v_n)_1 = (K^N(k) u_n,v_n)_1 \quad
\forall u_n, v_n\in V_n,
\]
respectively. Then $B^N_n(k) = A^N_n - K^N_n(k)$.
\begin{theorem}\label{BNn}
Assume that $G(B^N,\lambda^N)\subset H^{1+s_N}(\Omega_R)$ for some $0<s_N\leqslant 1$. For sufficiently large $N$ and $n$ and for each
$\lambda^N_n\in \sigma(B^N_n)\cap \Lambda_0$, 
$\nu(\Lambda_0,B) = \nu(\Lambda_0,B^N_n)$, and
\begin{equation*}
|\lambda^N - \lambda^N_n|\leqslant Ch_n^{2s_N/\kappa_N},
\end{equation*}
where $\kappa_N$ is the order of the pole $\lambda^N$ of $(B^N)^{-1}(\cdot)$.
\end{theorem}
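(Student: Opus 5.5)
We will apply Theorem~\ref{KarmaThm3} (for the rate) together with Corollary~\ref{EH} (for convergence and the equality of multiplicities). The limit operator function is $B^N(\cdot)$ and the approximations are $B^N_n(\cdot)$. Take $X=Y=V$, $X_n=Y_n=V_n$, $p_n=q_n$ the $V$-orthogonal projection onto $V_n$, $r_n:V_n\to V$ the inclusion, and $q'_n=p_n$.

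\textbf{Step 1: the Galerkin structure.} By definition, $(B^N_n(k)u_n,v_n)_1=(B^N(k)u_n,v_n)_1$ for all $u_n,v_n\in V_n$. Hence $B^N_n(k)=p_nB^N(k)r_n$, so the hypothesis $F_n=q'_nF r_n$ of Theorem~\ref{KarmaThm3} holds. Conditions \eqref{rnC}--\eqref{qny} follow at once: $\Vert r_n\Vert=1$, and $\Vert r_np_nx-x\Vert_1\to0$ by the standard density of conforming finite element spaces in $H^1(\Omega_R)$ as $h_n\to0$. Condition \eqref{qny} is trivial since $q'_n=q_n$.

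\textbf{Step 2: the hypotheses of Theorem~\ref{Thm1}.} Lemma~\ref{BNhF} shows that $B^N(\cdot)$ is holomorphic Fredholm with index zero. We must still check that $\rho(B^N)\neq\emptyset$. Theorem~\ref{BN} gives this for large $N$: $\nu(\Lambda_0,B^N)=\nu(\Lambda_0,B)$, and the boundary $\partial\Lambda_0$ lies in $\rho(B^N)$ because $B^N(k)\to B(k)$ in the sense used there. Alternatively, the coercivity argument for $\mathrm{Im}\,k\ge0$ can be repeated for the truncated DtN map.

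Next we verify regular convergence. Write $B^N_n(k)=A^N_n-K^N_n(k)$.
\begin{itemize}
\item[(a)] $A^N_n$ converges stably to $A^N$. Coercivity $(A^Nu,u)_1\ge\Vert u\Vert_1^2$ is inherited on $V_n$, so $\Vert(A^N_n)^{-1}\Vert\le1$. Convergence follows from $\Vert A^N_np_nx-p_nA^Nx\Vert_1=\Vert p_nA^N(p_nx-x)\Vert_1\le C\Vert p_nx-x\Vert_1\to0$.
\item[(b)] $K^N_n(k)=p_nK^N(k)|_{V_n}$ converges uniformly to the compact operator $K^N(k)$. Indeed,
\[\Vert K^N_n(k)p_n-p_nK^N(k)\Vert=\Vert p_nK^N(k)(p_n-I)\Vert.\]
This tends to zero because $K^N(k)$ is compact and $(p_n-I)^*=p_n-I$ converges strongly to zero. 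One can either argue with the adjoint $K^N(k)^*$ being compact, or use the fact that $K^N(k)$ factors through the compact embedding $H^1\hookrightarrow L^2$ plus a finite-rank boundary part, since $N$ is finite.
\end{itemize}
Corollary~\ref{EH} then gives $\nu(\Lambda_0,B^N_n)=\nu(\Lambda_0,B^N)=\nu(\Lambda_0,B)$ for $N$ and $n$ large, together with convergence of the eigenvalues.

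\textbf{Step 3: the rate.} Theorem~\ref{KarmaThm3} gives
\[|\lambda^N-\lambda^N_n|\le C(d_nd_n^*)^{1/\kappa_N}.\]
Here $d_n=\max\mathrm{dist}(x,V_n)$ over normalized $x\in G(B^N,\lambda^N)$, and $d_n^*$ is the same quantity for the adjoint generalized eigenspace $G((B^N)^*,\overline{\lambda^N})$. By the regularity assumption and standard interpolation estimates, $d_n\le Ch_n^{s_N}$, using that the eigenspace is finite dimensional so that the $H^{1+s_N}$ norms are uniformly bounded.

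The \emph{main obstacle} is $d_n^*$. The adjoint problem corresponds to the sesquilinear form with $\overline{k}$, $\overline{n(x)}$ and the adjoint DtN map $T^N(k)^*$. On the Fourier modes, this map multiplies by the complex conjugates of the same coefficients. Its generalized eigenfunctions therefore solve a Helmholtz transmission problem of the same type with a truncated (modified) DtN boundary condition. I would argue that they enjoy the same $H^{1+s_N}$ regularity, since the interior and interface singularities are governed by the same geometry and coefficients, and the boundary datum is smooth because only finitely many modes are present. This gives $d_n^*\le Ch_n^{s_N}$. Combining the two bounds yields
\[|\lambda^N-\lambda^N_n|\le C h_n^{2s_N/\kappa_N}.\]
If this adjoint regularity cannot be derived directly, I would add it as the natural companion assumption, or obtain it from the relation $G((B^N)^*,\overline{\lambda^N})=\overline{G(B^N,\lambda^N)}$, which holds when the form is complex-symmetric (it is, since $\langle T^N u,\bar v\rangle$ is symmetric).
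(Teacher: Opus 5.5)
Your proof follows the paper's architecture: the Galerkin identity $B^N_n(k)=q_nB^N(k)r_n$, $\rho(B^N)\neq\emptyset$ from Theorem~\ref{BN}, stable convergence of $A^N_n$ by inherited coercivity, uniform convergence of $K^N_n(k)$, Corollary~\ref{EH} for the multiplicities, and Theorem~\ref{KarmaThm3} with $q_n'=q_n$, $q_n^*=r_n$ for the rate.

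The one genuinely different step is (b). The paper argues quantitatively: it uses boundedness of $K^N(k):H^{1/2+\epsilon}(\Omega_R)\to V$ together with $\Vert u-p_nu\Vert_{1/2+\epsilon}\le Ch_n^{1/2-\epsilon}\Vert u\Vert_1$, a duality estimate for the $H^1$-projection that it does not spell out. Your identity $\Vert K^N(k)(p_n-I)\Vert=\Vert(p_n-I)K^N(k)^*\Vert\to0$ needs only three things: compactness, self-adjointness of $p_n$ in $V$, and density of $\bigcup_nV_n$. That is more elementary and is all Corollary~\ref{EH} requires. The paper's version also yields a rate $h_n^{1/2-\epsilon}$, which is never used.

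For $d_n^*$, your regularity-transfer argument as written is only a plausibility argument. The hypothesis is an \emph{assumed} $s_N$ for $G(B^N,\lambda^N)$, and ``same geometry and coefficients'' does not by itself carry it over to the adjoint space. The relation you list as a fallback is exactly the paper's argument and should be your main one:
\begin{itemize}
\item $\langle T^N(k)u,v\rangle=\langle u,\overline{T^N(k)\bar v}\rangle$ gives $(B^N)^*(k)u=\overline{B^N(\bar k)\bar u}$, and likewise for all $k$-derivatives.
\item Hence conjugated Jordan chains of $B^N$ at $\lambda^N$ are Jordan chains of $(B^N)^*$ at $\overline{\lambda^N}$.
\item So $G((B^N)^*,\overline{\lambda^N})=\overline{G(B^N,\lambda^N)}$, and $d_n^*=d_n$ because $\overline{V_n}=V_n$.
\end{itemize}

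Your regularity route can in fact be made rigorous, though for a different reason than you give. Since $n$ enters only at zeroth order, $\Delta u\in L^2(\Omega_R)$ across $\Gamma$ for every generalized eigenfunction. The Neumann datum on the circle $\Gamma_R$ is a trigonometric polynomial. So all generalized eigenfunctions of $B^N$ and of its adjoint lie in $H^2(\Omega_R)$, i.e.\ the hypothesis actually holds with $s_N=1$.

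Finally, drop the alternative coercivity argument for $\mathrm{Im}\,k\ge0$: at real $k$ it fails for the truncated map. Take a disk with radial $n$ and a Neumann mode $u=f(r)e^{im\theta}$ with $m>N$. Then $T^N(k)u=0$, so $u$ yields a real eigenvalue of $B^N$. Purely imaginary $k=i\tau$ does work. There the DtN coefficients are negative reals, being logarithmic derivatives of the positive, decreasing Macdonald functions, so $(B^N(i\tau)u,u)_1\ge c\Vert u\Vert_1^2$.
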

\begin{proof}
By Theorem \ref{BN}, the resolvent set of $B^N(\cdot)$ is non-empty.
Since $V_n$ is finite dimensional, $B^N_n : \Lambda\rightarrow \mathcal{L}(V_n)$ 
are holomorphic Fredholm with index zero.
Analogous to Lemma \ref{BNhF}, for each compact $\Lambda_1\subset \Lambda$, 
$\Vert B^N_n(k)\Vert\leqslant C$ holds for all $n$ and $N$ and all 
$k\in \Lambda_1$. 
Moreover, the boundedness of $\Vert (A^N_n)^{-1}\Vert \leqslant C$ 
for all $N$ and $n$ holds for the same reason as that of $(A^N)^{-1}$.
Let $k\in \Lambda$.
By definition,
\begin{equation*}
B^N_n(k) = q_n B^N(k) r_n, \quad
A^N_n = q_n A^N r_n,\quad
K^N_n(k) = q_n K^N(k) r_n,
\end{equation*}
with $r_n : V_n \rightarrow V$ the embedding.
Hence we have that
\begin{align*}
\Vert q_n A^N u - A^N_n p_n u\Vert_1 &= \Vert q_n A^N(u - p_n u)\Vert_1 \\
& \leqslant \Vert q_n\Vert \Vert A^N\Vert \Vert u - p_n u\Vert_1 \rightarrow 0,\; n\rightarrow \infty,\; \forall u\in V,
\end{align*}
and, by the boundedness of $K^N(k):H^{1/2+\epsilon}(\Omega_R)\rightarrow V$,
\begin{align*}
\Vert q_n K^N(k) u - K^N_n p_n u\Vert_1 & = \Vert q_n K^N(k)(u - p_n u)\Vert_1
\\
& \leqslant \Vert q_n\Vert \Vert K^N(k)\Vert_{H^{1/2+\epsilon}(\Omega_R)\rightarrow V}
\Vert u - p_n u\Vert_{1/2+\epsilon}\\ &\leqslant Ch_n^{1/2-\epsilon}\Vert u\Vert_1,\; \forall u\in V.
\end{align*}
Therefore, $A^N_n$ converges stably to $A^N$ and $K^N_n(k)$ converges
uniformly to $K^N(k)$. By Corollary~\ref{TTN}, $\nu(\Lambda_0,B^N)
= \nu(\Lambda_0,B^N_n)$ 
for sufficiently large $n$.

Finally, we apply Theorem~\ref{KarmaThm3}.
Let $q'_n = q_n$ and $r_n$ be defined as above such that \eqref{rnC}-\eqref{qny} are satisfied.
In addition,
\begin{equation*}
(q_n u,v_n)_1= (u,r_nv_n)_1, \quad \forall u\in V,\; \forall v_n\in V_n.
\end{equation*}
Hence $q_n^* = r_n$.
For each $u, v\in V$ it holds
\begin{align*}
\langle T^N(k)u,v\rangle &= \langle  \sum_{n=0}^{N}{\!}^{{}^{\scriptstyle{\prime}}}\frac{k{H^{(1)}_n}^{\prime}(kR)}{H^{(1)}_n(kR)}(a_n(u)\cos(n\theta) + b_n(u)\sin(n\theta)),v\rangle
\\
&= \pi R \sum_{n=0}^{N}{\!}^{{}^{\scriptstyle{\prime}}} \frac{k{H^{(1)}_n}^{\prime}(kR)}{H^{(1)}_n(kR)} (a_n(u)a_n(\overline{v})
+ b_n(u)b_n(\overline{v})) \\
&= \langle u,\overline{T^N(k)\overline{v}}\rangle.
\end{align*}
Therefore, $(T^N)^*(k)f = T^N(\overline{k})^*f = \overline{T^N(\overline{k})\overline{f}}$.
So $(B^N)^*(k)u = \overline{B^N(\overline{k})\overline{u}}$.
Hence $((B^N)^*)^{(\ell)}(k)u = \overline{(B^N)^{(\ell)}(\overline{k})\overline{u}}$.
Consequently,
$G((B^N)^*,\overline{\lambda^N}) = \overline{G(B^N,\lambda^N)}$.
Altogether, we have that
\begin{equation*}
d^*_n = d_n = \sup_{\substack{u\in G(B^N,\lambda^N)\\ \Vert u\Vert_1=1}}
\inf_{u_n\in V_n}\Vert u - u_n\Vert_1\leqslant Ch_n^{s_N}.
\end{equation*}
By Theorem~\ref{KarmaThm3}, the inequality $|\lambda^N - \lambda^N_n|\leqslant Ch_n^{2s_N/\kappa_N}$ holds.
\end{proof}

\section{Numerical Experiments}
We present numerical examples to demonstrate the performance of the proposed method, including its convergence order, the impact of the truncation order in the DtN mapping, the effect of the radius of the truncated domain, and the validation of the theoretical results in literature \cite{Popov1999, galkowski2018quantum, moiola2019acoustic, dyatlov2019mathematical}.

The construction of the conforming finite element spaces $V_h \subset V$ is as follows. First a regular triangulation $\tilde{\mathcal{T}}_{h}$ for $\Omega_R$ is generated with mesh size $h$. Note that the union of all triangles is a polygon. For each triangle, its intersection with a curved $\Gamma$ or $\Gamma_R$ is either empty, or contains only a vertex or two vertices of the triangle. Based on $\tilde{\mathcal{T}}_h$, a partition $\mathcal{T}_h$ of $\Omega_R$ is obtained by allowing its elements 
to have one curved edge, that is by replacing a straight edge having both
vertices on $\Gamma$ or $\Gamma_R$ by the corresponding curve segment.
For each curvilinear element $K\in \mathcal{T}_h$, we denote by
$\tilde{F}_{\tilde{K}}$ the mapping from the corresponding straight
element $\tilde{K}\in \tilde{\mathcal{T}}_h$ to $K$.
Then the conforming finite element space is defined by
\begin{equation}\label{Vh}
V_h = \{u_h\in H^1(\Omega_R)\,:\, u_h|_K\circ \tilde{F}_{\tilde{K}} = p_h, \, \text{for some\,} \, p_h\in P_1(\tilde{K})\}.
\end{equation}
For the definition of the mapping $\tilde{F}_{\tilde{K}}$
and the approximation property of $V_h$, see Section 8.3 of \cite{Monk03}.

    Note that, for second order PDEs and linear scalar elements, simple curved domains can be approximately covered by triangular meshes. The “variational crimes” of not using the exact computational domain or exact quadratures are admissible and do not spoil convergence. For higher-order elements, this crude boundary approximation impacts the convergence rate. Discussions along this line are classical and can be found in \cite{Ciarlet1972, Fix1972, zlamal1973, Dubios1990SINUM}, Chp. III of \cite{Braess2007}, Chp. 10 of \cite{BrennerScott2008} and Section 8.3 of \cite{Monk03}.

\subsection{Example 1}
Let $\Omega$ be the unit disk. We consider the transmission problem with the incident field given by
\[
	u^i(r,\theta)=J_\ell(k r){\rm e}^{{i }\ell \theta},\quad \theta\in[0,2\pi),
\]
 where $r=|x|$, $J_\ell$ is the first kind Bessel function of order $\ell$  ($\ell=0$, $1$, $2$, $\cdots$).
 The scattered field can be written as
\[
 	u^s(r,\theta)=\dfrac{C_\ell(k;k \sqrt{n_i})}{W_\ell(k;k \sqrt{n_i})}H_\ell^{(1)}(k r){\rm e}^{i \ell \theta},\quad r > 1, \quad \theta\in[0,2\pi),
\]
 where $H_\ell^{(1)}$ is the Hankel function of the first kind of order $\ell$, 
 \[
 	W_\ell(k; k \sqrt{n_i})=
 	\left|\begin{array}{cc}
 		J_\ell(k \sqrt{n_i})&-H_\ell^{(1)}(k)\\
 		k \sqrt{n_i} J_\ell'(k \sqrt{n_i})&- k H_\ell^{(1)}{'} (k)
 	\end{array}\right|,
 \]
 and
  \[
 	C_\ell(k;k \sqrt{n_i})=
 	\left|\begin{array}{cc}
 	J_\ell(k \sqrt{n_i})&J_\ell(k)\\
 		k \sqrt{n_i} J_\ell'(k \sqrt{n_i})&k J_\ell'(k)
 	\end{array}\right|.
 \]
 The exact scattering poles are $k$'s such that $W_\ell(k; k \sqrt{n_i})=0$
for $\ell = 0,1,\dots$

A series of meshes are generated with the size of the coarsest mesh $h_1 \approx 0.04\pi$. Subsequent meshes have mesh sizes $h_j = h_{j-1}/2$. We take $R = 1.25$ for $\Omega_R$ and $N = 20$ for the truncation order of the DtN mapping. Let $n_i = 4$. We compute the scattering poles in the region 
\[
\Lambda_0 := \{z\in \mathbb{C}\,;\, 0<\text{Re}z<4,
-4<\text{Im}z<0\}.
\] 
The computed poles are ordered by their modulus. We list some computed poles using the mesh size $h = h_5$:
\begin{align*}
&0.4367 - 0.3039i, \quad 1.1155 - 0.2396i, \quad  1.1155 - 0.2396i, \quad 1.7563 - 0.1744i, 
\\
& 1.7563 - 0.1744i, \quad  0.4509 - 1.7933i, \quad  0.4509 - 1.7933i, \quad 1.9778 - 0.2791i,  
\\
&2.3841 - 0.1217i, \quad  2.3841 - 0.1217i, \quad  1.3476 - 2.2112i, \quad 1.3476 - 2.2112i.
\end{align*}
Two poles are isolated, while the others form closely spaced pairs, suggesting multiplicities of $1$ and $2$, respectively.
For each pair of poles, we take the average of them when we compute the errors. Denote by $k$ an exact pole, by $k_j$ its approximation using mesh size $h_j$. We define the error and convergence order as
\[
e_j := k_j - k, \quad 
{\rm CO}:=\log_2(|e_{j-1}|/|e_j|).
\]
The results are shown in Table~\ref{T:ex1_order} for three poles. The first row shows the values computed by the finest mesh ($h_5$). We observe the second order convergence as $P_1$ is used to define $V_h$ in \eqref{Vh}.

\begin{table}[!htp]
\centering
\begin{tabular}{r|c|c|c|c|c|c}
\hline
 $k_5$& $0.4367 -0.3039i$ & & $1.1155 -0.2396i$ & & $1.7563 -0.1744i$ &
 \\
\hline
$e_2$ & $-$2.4e-05 $-$ 4.4e-05$i$ &  & 3.8e-04 $-$ 5.3e-04$i$ &  & 1.9e-03 $-$ 1.1e-03$i$ & 
\\
\hline
$e_3$ & $-$6.0e-06 $-$ 1.1e-05$i$ & 1.99 & 9.6e-05 $-$ 1.3e-04$i$ & 1.98 & 4.8e-04 $-$ 2.9e-04$i$ & 1.98 
\\
\hline
$e_4$ & $-$1.5e-06 $-$ 2.8e-06$i$ & 1.98 & 2.4e-05 $-$ 3.3e-05$i$ & 1.99 & 1.2e-04 $-$ 7.3e-05$i$ & 1.99
\\
\hline
$e_5$ & $-$3.8e-07 $-$ 7.1e-07$i$ & 1.99 & 6.0e-06 $-$ 8.4e-06$i$ & 2.00 & 3.0e-05 $-$ 1.8e-05$i$ & 2.00
\\
\hline
\end{tabular}
\caption{The convergence order of the computed poles for the unit disk with inhomogeneity $n_i = 4$.}
\label{T:ex1_order}
\end{table}
 

Next we check the dependence of the relative error on the radius $R$ for $\Omega_R$. Let $k_5$ be a computed pole on the mesh with mesh size $h_5$. The relative error is defined as $RE_5 = |k_5 - k|/|k|$. We compute the relative errors for  $R = 1.25, 1.2, 1.15, 1.1, 1.05$. Listed in Table \ref{T:ex1_R} are the results for three poles. The choice of $R$ has little impact on the error; thus, smaller $R$ should be used to reduce computational cost.

\begin{table}[!htp]
\centering
\begin{tabular}{c|c|c|c}
\hline
$k_5$ & $0.4509 - 1.7933i$ & $1.9778 - 0.2791i$ & $2.3841 - 0.1217i$
\\
\hline
$R = 1.25$ & 2.5424e-05 & 3.2653e-05 & 3.7006e-05
\\
\hline
$R = 1.20$ & 2.2160e-05 & 3.2792e-05 & 3.6109e-05
\\
\hline
$R = 1.15$ & 1.9395e-05 & 3.2904e-05 & 3.4951e-05
\\
\hline
$R = 1.10$ & 1.7005e-05 & 3.2985e-05 & 3.3413e-05
\\
\hline
$R = 1.05$ & 1.4957e-05 & 3.3043e-05 & 3.1331e-05
\\
\hline
\end{tabular}
\caption{The relative error  for different $R$'s for the unit disk with $n_i = 4$ and mesh size $h = h_5$.}
\label{T:ex1_R}
\end{table}

Now we check the impact of truncation order $N$ for the DtN mapping on the accuracy. In Fig.~\ref{F:ex1_N}, we plot the relative errors of the computed poles for $N$ varying from $5$ to $20$, with each line connecting the errors of the computed poles that correspond to the same exact pole. The results show that, once $N$ is sufficiently large ($N\ge 5$ in this case), further increases in $N$ have a negligible effect on the computed poles.
\begin{figure}[!htp]
\centering
\includegraphics[width=0.6\linewidth]
{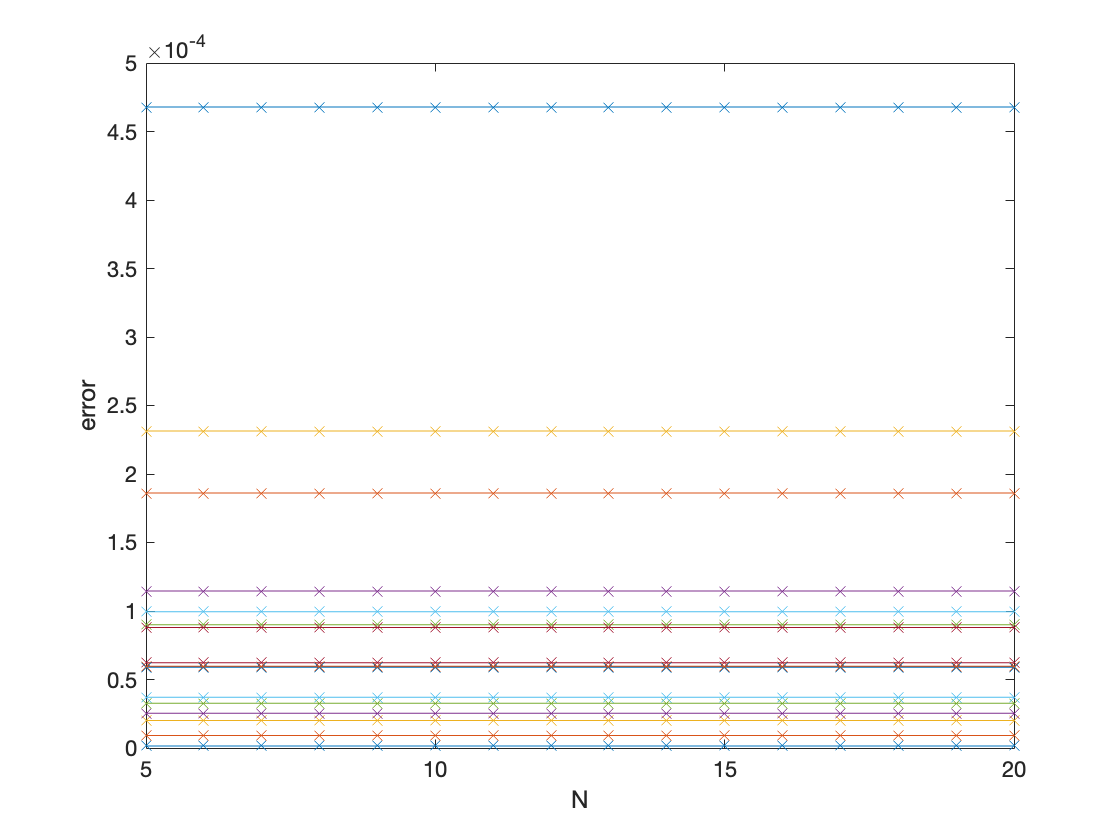}
\caption{The error of the computed poles for $N = 5,\dots,20$ for the unit disk with $n_i = 4$ ($h = h_5$).}
\label{F:ex1_N}
\end{figure}

We carry out the same experiment for a different index of refraction $n_i = 0.25$. The  computed poles on the finest mesh are:
\begin{align*}
&0.4597 - 1.3756i, \quad 0.4597 - 1.3756i, \quad 1.3697 - 1.7654i, \quad 1.3697 - 1.7654i, \\ & 0.4475 - 2.7233i, \quad 0.4475 - 2.7233i, \quad 2.2830 - 2.0533i, \quad 2.2830 - 2.0533i, \\ & 1.3400 - 3.2270i, \quad 1.3400 - 3.2270i, \quad 3.2028 - 2.2873i, \quad 3.2028 - 2.2873i.
\end{align*}
The error and the convergence order are shown in Table~\ref{T:ex1b_order} for three poles. Second order convergence is observed.
\begin{table}[!htp]
\centering
\begin{tabular}{r|c|c|c|c|c|c}
\hline
 $k_5$& $0.4597 - 1.3756i$ & & $1.3697 - 1.7654i$ & & $0.4475 - 2.7233i$ &
 \\
 \hline
$e_2$ & 2.2e-04 $-$ 1.9e-04$i$ &  & 9.1e-04 $-$ 2.9e-03$i$ &  & 2.6e-03 $-$ 3.1e-02$i$ &  
\\
\hline
$e_3$ & 6.1e-05 $-$ 4.5e-05$i$ & 1.94 & 3.0e-04 $-$ 6.8e-04$i$ & 2.02 & 7.4e-04 $-$ 7.8e-03$i$ & 2.00 
\\
\hline
$e_4$ & 1.5e-05 $-$ 1.0e-05$i$ & 2.02 & 7.7e-05 $-$ 1.7e-04$i$ & 2.02 & 1.9e-04 $-$ 1.9e-03$i$ & 2.01 
\\
\hline
$e_5$ & 3.9e-06 $-$ 2.5e-06$i$ & 2.02 & 1.9e-05 $-$ 4.1e-05$i$ & 2.01 & 4.7e-05 $-$ 4.8e-04$i$ & 2.01 
\\
\hline
\end{tabular}
\caption{The convergence order of the computed poles for the unit disk with $n_i = 0.25$.}
\label{T:ex1b_order}
\end{table}

The relative error for different $R$'s and $N$'s
are shown in Table \ref{T:ex1b_R} and Fig.~\ref{F:ex1b_N}, respectively. Again, the results indicate that the choices of $R$ and $N$ have little impact on the accuracy for $R$ ranging from
1.05 to 1.25 and $N$ from 6 to 20.
\begin{table}[!htp]
\centering
\begin{tabular}{c|c|c|c}
\hline
$k_5$ & $2.2830 - 2.0533i$ & $1.3400 - 3.2270i$ & $3.2028 - 2.2873i$
\\
\hline
$R = 1.25$ & 5.0727e-05 & 4.1005e-04 & 9.5687e-05
\\
\hline
$R = 1.20$ & 4.3417e-05 & 2.8025e-04 & 8.0809e-05
\\
\hline
$R = 1.15$ & 3.7752e-05 & 1.9037e-04 & 6.8752e-05
\\
\hline
$R = 1.10$ & 3.3164e-05 & 1.2817e-04 & 5.8420e-05
\\
\hline
$R = 1.05$ & 2.9361e-05 & 8.6144e-05 & 4.9411e-05
\\
\hline
\end{tabular}
\caption{The relative errors for different $R$'s for the unit disk with
$n_i = 0.25$ and mesh size $h = h_5$.}
\label{T:ex1b_R}
\end{table}

\begin{figure}[!htp]
\centering
\includegraphics[width=0.6\linewidth]
{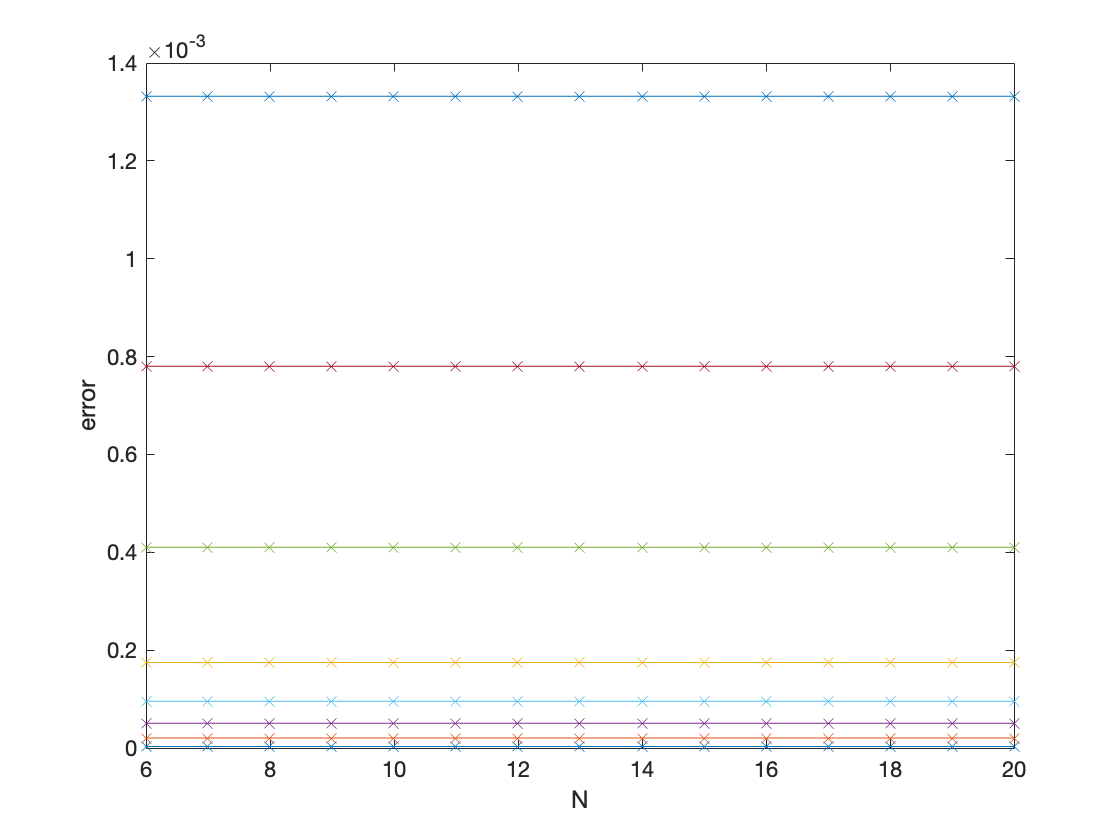}
\caption{The error for $N = 6,\dots,20$ for the unit disk with $n_i = 0.25$ ($h = h_5$).}
\label{F:ex1b_N}
\end{figure}


\subsection{Example 2}
In this example, we compute the scattering poles for the unit square $(-0.5,0.5)^2$. The setting are the same as the previous example except that $R = 0.8$ for $\Omega_R$. We list below the first several poles computed with $n_i=4$ and on the mesh $h = h_5$.
\begin{align*}
&0.7719 - 0.5435i, \quad 1.9724 - 0.4386i, \quad 1.9724 - 0.4386i, \quad 2.9121 - 0.3073i, \\ &0.7201 - 2.9037i, \quad 3.3601 - 0.3411i, \quad 3.4397 - 0.5360i, \quad 0.8476 - 3.3959i, \\& 2.3674 - 3.8539i, \quad 2.3674 - 3.8539i.
\end{align*}
The convergence orders for three poles computed on meshes with $h = h_1, \dots,  h_5$ are shown in Table~\ref{T:ex2_order}. Since the exact poles are not available, the error is defined as $e_j = k_j - k_{j+1}$ on two meshes. The convergence order is defined as $\log_2(|e_{j-1}|/|e_j|)$. Second order convergence is achieved.
\begin{table}[!htp]
\centering
\begin{tabular}{c|c|c|c|c|c|c}
\hline
$k_5$ & $0.7719 - 0.5435i$ & & $1.9724 - 0.4386i$ & & $2.9121 - 0.3073i$
\\
\hline
$e_1$ & $-$4.8e-04 $-$ 7.2e-04$i$ & & 6.0e-03 $-$ 9.0e-03$i$ & & 2.5e-02 $-$ 1.7e-02$i$ &
\\
\hline
$e_2$ & $-$1.2e-04 $-$ 1.8e-04$i$ & 1.97 & 1.5e-03 $-$ 2.3e-03$i$ & 1.96 & 6.3e-03 $-$ 4.3e-03$i$ & 1.98
\\
\hline
$e_3$ & $-$3.3e-05 $-$ 4.9e-05$i$ & 1.92 & 4.0e-04 $-$ 5.9e-04$i$ & 1.96 & 1.6e-03 $-$ 1.1e-03$i$ & 1.96
\\
\hline
$e_4$ & $-$8.2e-06 $-$ 1.2e-05$i$ & 1.98 & 1.0e-04 $-$ 1.5e-04$i$ & 2.00 & 4.1e-04 $-$ 2.7e-04$i$ & 2.00
\\
\hline
\end{tabular}
\caption{The error and
the convergence order for the unit square with $n_i = 4$.}
\label{T:ex2_order}
\end{table}

We carry out the computation for $n_i = 0.25$ on meshes with $h = h_1,\dots, h_5$. The poles in $\Lambda_0$ are
\begin{align*}
&0.7968 - 2.4042i, \quad 0.7968 - 2.4042i, \quad 2.2210 - 2.8852i, \quad 2.5953 - 3.3590i, \\& 3.9984 - 3.5622i,\quad 3.9984 - 3.5622i.
\end{align*}
The results for three poles are shown in Table~\ref{T:ex2b_order}. Second order convergence is obtained.
\begin{table}[!htp]
\centering
\begin{tabular}{c|c|c|c|c|c|c}
\hline
$k_5$ & $0.7968 - 2.4042i$ & & $2.2210 - 2.8852i$ & & $2.5953 - 3.3590i$
\\
\hline
$e_1$ & 6.3e-03 $-$ 4.7e-03$i$ & & 3.1e-02 $-$ 4.2e-02$i$ & & 1.2e-01 $-$ 1.5e-02$i$ &
\\
\hline
$e_2$ & 2.1e-03 $-$ 1.4e-03$i$ & 1.65 & 6.9e-03 $-$ 1.0e-02$i$ & 2.07 & 4.8e-02 $-$ 3.7e-04$i$ & 1.36
\\
\hline
$e_3$ & 4.7e-04 $-$ 3.0e-04$i$ & 2.16 & 1.7e-03 $-$ 2.4e-03$i$ & 2.06 & 1.0e-02 $-$ 2.0e-03$i$ & 2.20
\\
\hline
$e_4$ & 1.2e-04 $-$ 7.3e-05$i$ & 1.99 & 4.2e-04 $-$ 5.9e-04$i$ & 2.03 & 2.7e-03 $-$ 4.8e-04$i$ & 1.97
\\
\hline
\end{tabular}
\caption{The error and the convergence order for the unit square with $n_i = 0.25$.}
\label{T:ex2b_order}
\end{table}

Next we show the results for poles with larger real parts. In Fig.~\ref{F:ex2_inh4} we plot the computed poles 
for $n_i = 4$ with $h = h_5$ and $N = 30$. 
Other parameters remain the same as in previous experiments. One observes a sequence of poles approach to real axis from below as their real parts increase, indicating that they approach the real axis. Note that it is proved in \cite{Popov1999} that, if $\Omega$ is smooth and strictly convex, for index of refraction greater than $1$, there exists an infinite sequence of resonances tending to the real axis.

\begin{figure}[!htp]
\centering
\includegraphics[width=0.6\linewidth]
{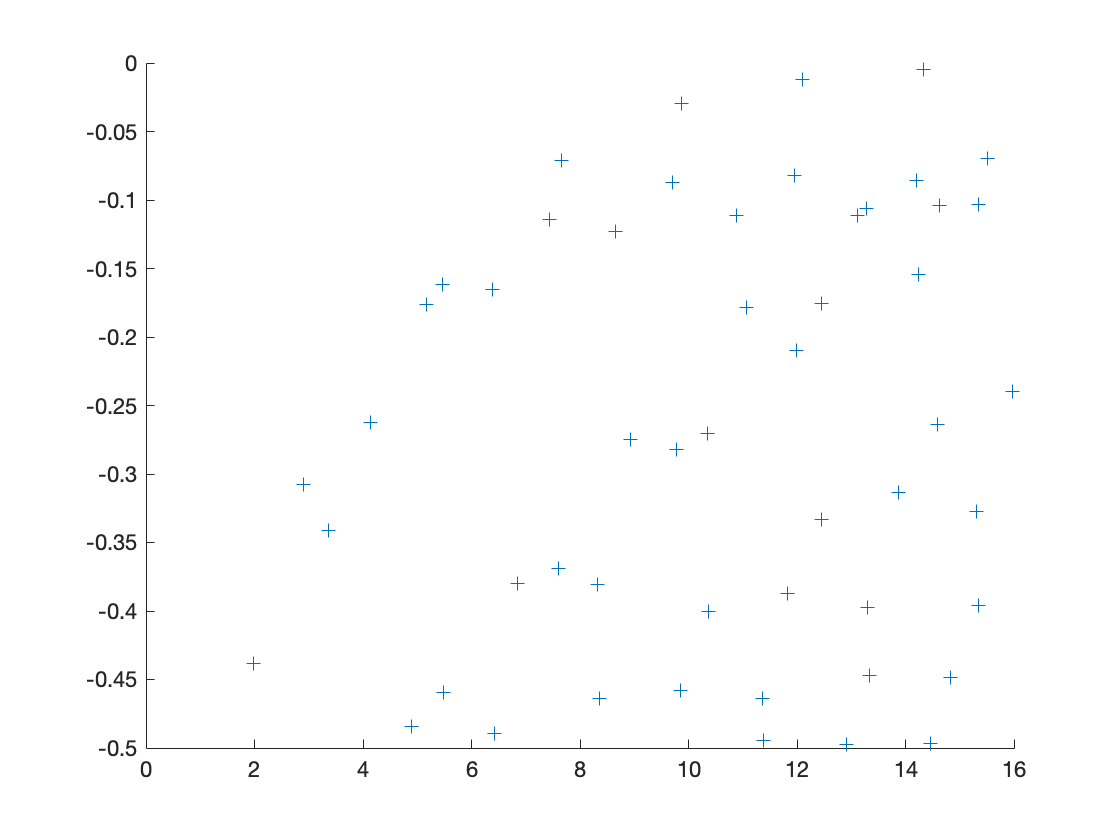}
\caption{The computed poles of the unit square with $n_i = 4$ with mesh size $h = h_5$.}
\label{F:ex2_inh4}
\end{figure}

We perform the same computation for $n_i = 0.25$. The computed poles are plotted in Fig.~\ref{F:ex2_inh0.25}. It is observed that there is a gap between the computed poles and the real axis. The phenomenon coincides with the theory on pole-free zone in the complex plane in \cite{moiola2019acoustic}.
\begin{figure}[!htp]
\centering
\includegraphics[width=0.6\linewidth]
{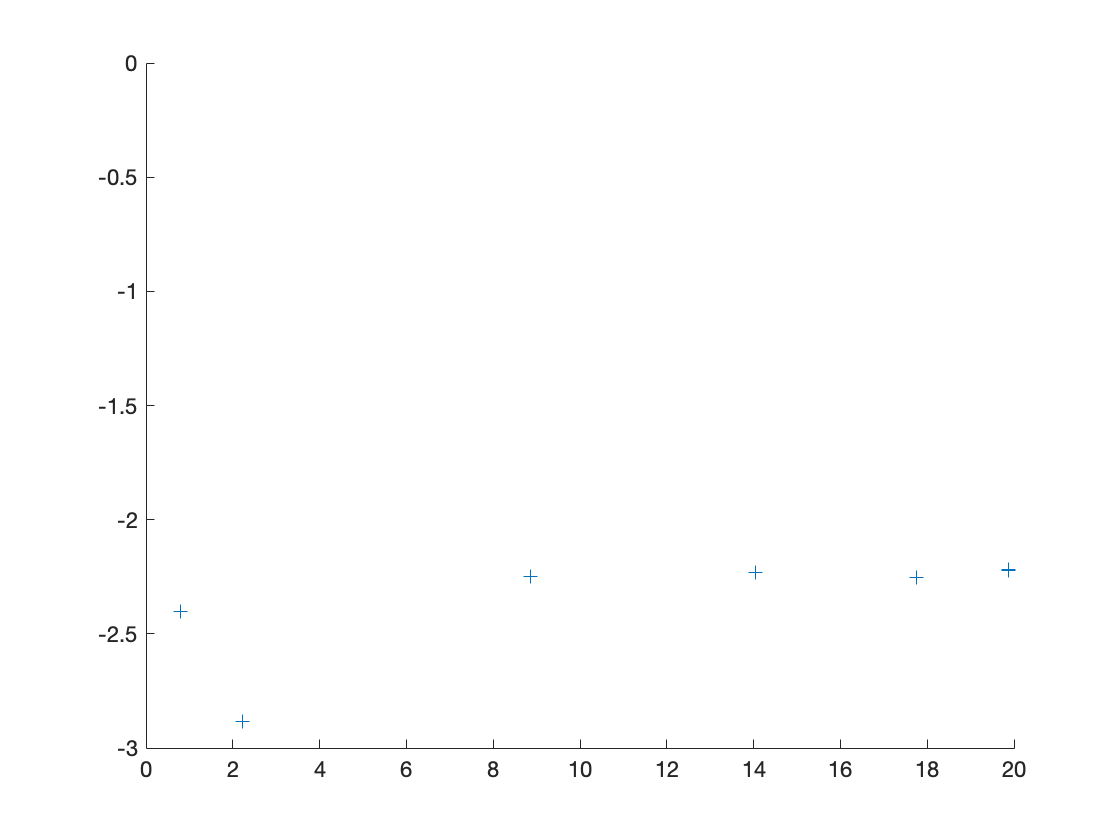}
\caption{The computed poles of the unit square with $n_i = 0.25$ with mesh size $h = h_5$.}
\label{F:ex2_inh0.25}
\end{figure}

\subsection{Example 3}
We consider the L-shaped domain $(-0.5,0.5)^2\setminus [0,0.5]^2$ and set $n_i = 4$, $R = 0.8$ and $N = 20$. For $h = h_5$,
the computed poles in $\Lambda_0$ are listed below
\begin{align*}
&0.8779 - 0.6497i, \quad 2.0162 - 0.4630i, \quad 2.4495 - 0.5704i, \quad 3.4609 - 0.5095i, \\ &0.8382 - 3.3984i, \quad 3.5877 - 0.3881i, \quad 0.8764 - 3.5520i.
\end{align*}
We see that there are no two computed poles that are extremely close to each other, indicating that the orders of these poles are $1$. The results for three poles using  meshes with $h = h_1,\dots,h_5$ are shown in Table~\ref{T:ex3}. Second order of convergence is achieved. 
\begin{table}[!htp]
\centering
\begin{tabular}{c|c|c|c|c|c|c}
\hline
$k_5$ & $0.8779 - 0.6497i$ && $2.0162 - 0.4630i$ && $2.4495 - 0.5704i$ 
\\
\hline
$e_1$ & $-$8.5e-04 $-$ 1.1e-03$i$ & & 6.1e-03 $-$ 9.5e-03$i$ & & 1.2e-02 $-$ 1.8e-02$i$ &
\\
\hline
$e_2$ & $-$2.3e-04 $-$ 2.7e-04$i$ & 1.94 & 1.5e-03 $-$ 2.4e-03$i$ & 1.98 & 3.0e-03 $-$ 4.4e-03$i$ & 2.05
\\
\hline
$e_3$ & $-$6.2e-05 $-$ 7.6e-05$i$ & 1.86 & 4.2e-04 $-$ 6.5e-04$i$ & 1.88 & 8.1e-04 $-$ 1.2e-03$i$ & 1.89
\\
\hline
$e_4$ & $-$1.6e-05 $-$ 1.9e-05$i$ & 1.99 & 1.0e-04 $-$ 1.6e-04$i$ & 2.02 & 2.0e-04 $-$ 2.9e-04$i$ & 2.01
\\
\hline
\end{tabular}
\caption{The errors and convergence order for the L-shaped domain with $n_i = 4$.}
\label{T:ex3}
\end{table}

For $n_i = 0.25$, there are four poles in $\Lambda_0$:
\begin{equation*}
0.8044 - 2.4392i, \quad 0.9602 - 2.9405i, \quad 2.5853 - 3.4012i, \quad 2.7095 - 3.5441i.
\end{equation*}
We show the results in Table~\ref{T:ex3b} for $h = h_1,\dots, h_5$. Again, second order convergence is achieved.
\begin{table}[!htp]
\centering
\begin{tabular}{c|c|c|c|c|c|c}
\hline
$k_5$ & $0.8044 - 2.4392i$ & & $0.9602 - 2.9405i$ & & $2.5853 - 3.4012i$
\\
\hline
$e_1$ & 7.8e-03 $-$ 5.7e-03$i$ & & 7.1e-02 $-$ 4.3e-03$i$ & & 1.1e-01 $+$ 1.0e-01$i$ &
\\
\hline
$e_2$ & 2.2e-03 $-$ 1.5e-03$i$ & 1.84 & 1.8e-02 $-$ 5.1e-03$i$ & 1.95 & 4.6e-02 $+$ 2.7e-02$i$ & 1.46
\\
\hline
$e_3$ & 5.3e-04 $-$ 3.4e-04$i$ & 2.09 & 4.3e-03 $-$ 1.4e-03$i$ & 2.04 & 1.4e-02 $+$ 7.6e-03$i$ & 1.76
\\
\hline
$e_4$ & 1.4e-04 $-$ 8.4e-05$i$ & 1.99 & 1.0e-03 $-$ 3.5e-04$i$ & 2.02 & 3.6e-03 $+$ 1.8e-03$i$ & 1.98
\\
\hline
\end{tabular}
\caption{The errors and
the convergence order for the L-shaped domain with $n_i = 0.25$.}
\label{T:ex3b}
\end{table}
We also compute poles for larger real parts.
The computed poles are shown in Fig.~\ref{F:ex3} for $n = 4$ and Fig.~\ref{F:ex3b} for $n = 0.25$. For $n_i=4$, there are poles with relative larger real parts that are close to real axis. Note that the domain is non-convex. The results suggests that the theory in \cite{Popov1999} also hold for non-convex domains. When $n_i=0.25$, there exists a gap between the poles and the real axis, which is consistent with the \cite{moiola2019acoustic}. 

\begin{figure}[!htp]
\centering
\includegraphics[width=0.6\linewidth]
{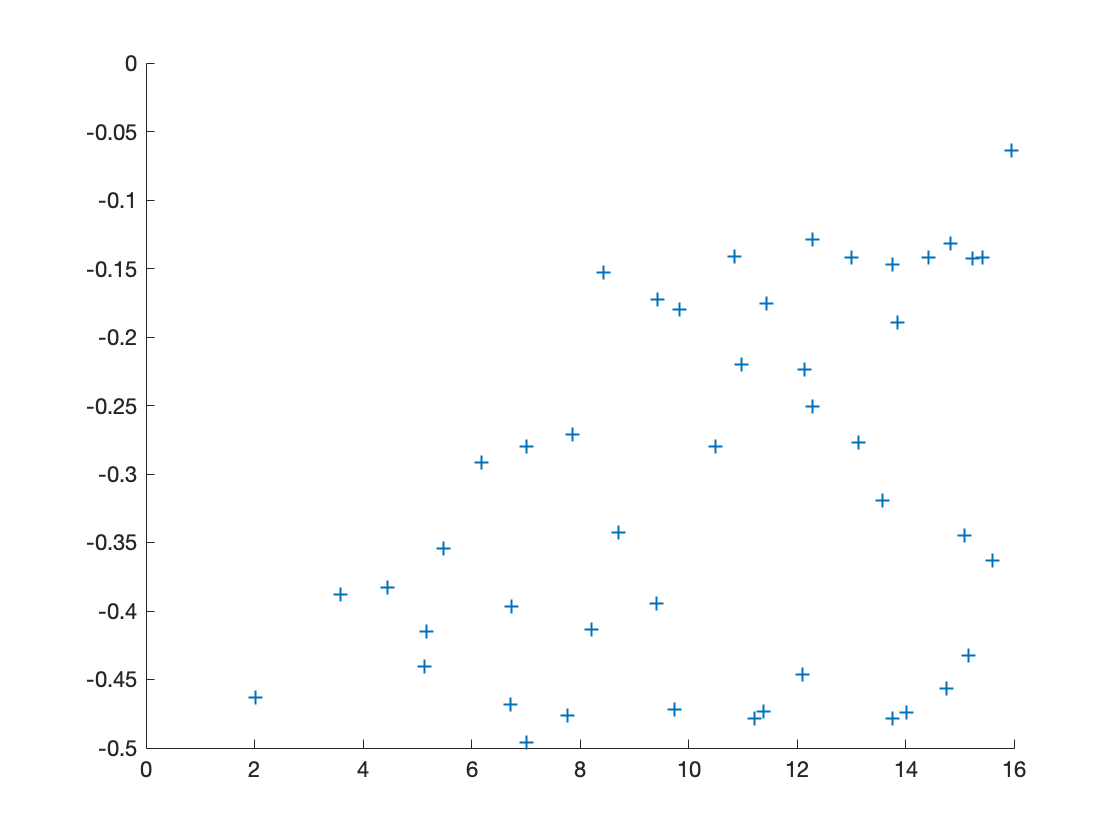}
\caption{The computed poles of the L-shaped domain with $n_i = 4$ and mesh size $h = h_5$.}
\label{F:ex3}
\end{figure}

\begin{figure}[!htp]
\centering
\includegraphics[width=0.6\linewidth]
{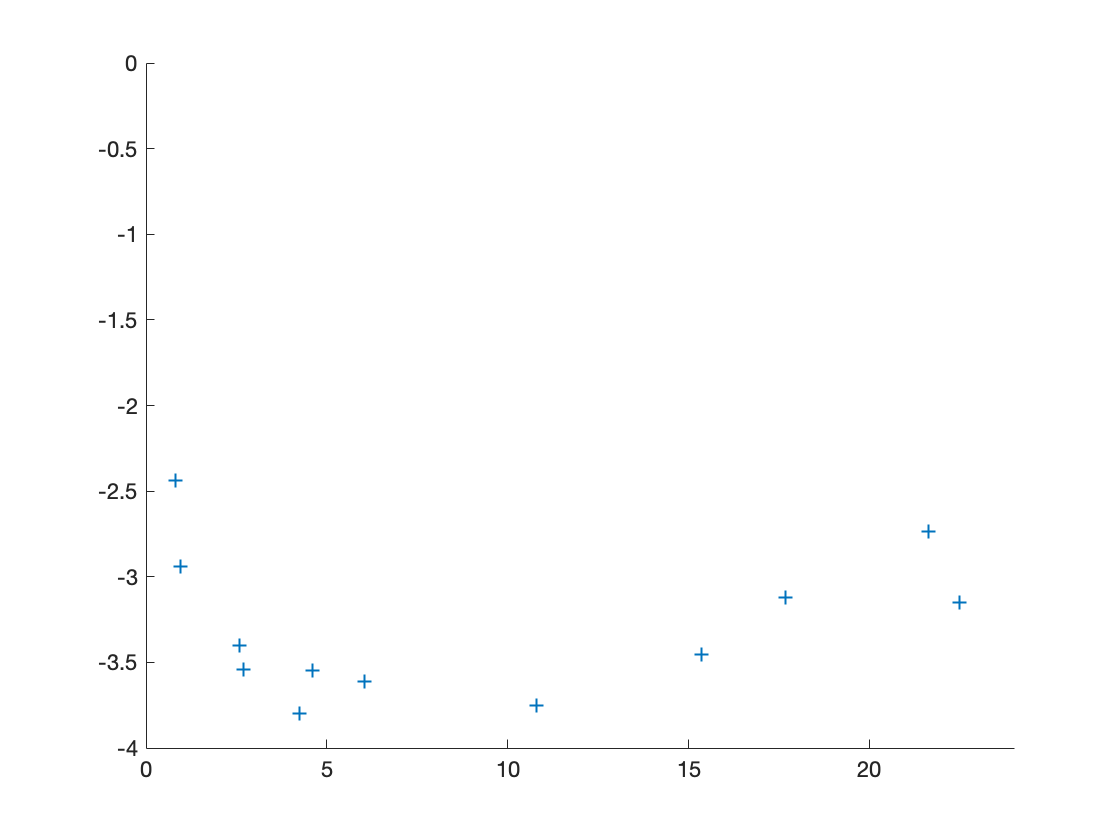}
\caption{The computed poles of the L-shaped domain with $n_i = 0.25$ and mesh size $h = h_6$.}
\label{F:ex3b}
\end{figure}

\subsection{Example 4}
We consider an ellipse with major axis
and minor axis being $2.4$ and $1.6$, respectively. The radius of $\Omega_R$ is set to $R = 1.3$. The computed poles for $n_i = 4$ in $\Lambda_0$ with $h = h_5$ are listed below
\begin{align*}
&0.4439 - 0.3139i, \quad 1.0348 - 0.2279i, \quad 1.2682 - 0.2791i, \quad 1.6496 - 0.1836i, \\& 1.8162 - 0.1896i, \quad 0.4757 - 1.7956i, \quad 0.4505 - 1.8180i, \quad 2.2269 - 0.3009i, \\&2.2678 - 0.1562i, \quad 2.3890 - 0.1363i, \quad 1.3632 - 2.2120i, \quad 1.3710 - 2.2092i.
\end{align*}
As in Example~3, there are no two poles that are extremely close to each other. We show in Table~\ref{T:ex4} the convergence order for three poles.
\begin{table}[!htp]
\centering
\begin{tabular}{c|c|c|c|c|c|c}
\hline
$k_5$ &$0.4439 - 0.3139i$& &$1.0348 - 0.2279i$& &$1.2682 - 0.2791i$&
\\
\hline
$e_1$ & $-$8.2e-05 $-$ 1.3e-04$i$ & & 7.8e-04 $-$ 1.1e-03$i$ & & 1.6e-03 $-$ 2.4e-03$i$ &
\\
\hline
$e_2$ & $-$2.1e-05 $-$ 3.5e-05$i$ & 1.91 & 2.1e-04 $-$ 3.1e-04$i$ & 1.89 & 4.5e-04 $-$ 6.4e-04$i$ & 1.91
\\
\hline
$e_3$ & $-$5.4e-06 $-$ 9.0e-06$i$ & 1.97 & 5.3e-05 $-$ 7.8e-05$i$ & 1.99 & 1.1e-04 $-$ 1.6e-04$i$ & 2.00
\\
\hline
$e_4$ & $-$1.4e-06 $-$ 2.3e-06$i$ & 1.99 & 1.3e-05 $-$ 1.9e-05$i$ & 1.99 & 2.8e-05 $-$ 4.0e-05$i$ & 1.99
\\
\hline
\end{tabular}
\caption{The error and convergence order for the ellipse with $n_i = 4$.}
\label{T:ex4}
\end{table}

For $n_i = 0.25$, the poles in $\Lambda_0$ are
\begin{align*}
&0.4203 - 1.2660i, \quad 0.5122 - 1.5492i, \quad 1.3795 - 1.7090i, \quad 1.3830 - 1.8082i, \\& 0.4540 - 2.7306i, \quad 0.4573 - 2.7448i, \quad 2.2962 - 2.0217i, \quad 2.2899 - 2.0499i, \\& 1.3589 - 3.2323i, \quad 1.3590 - 3.2361i, \quad 3.2105 - 2.2579i, \quad 3.2065 - 2.2655i.
\end{align*}
The results for three poles are shown in Table~\ref{T:ex4b}. Second order convergence is achieved.

\begin{table}[!htp]
\centering
\begin{tabular}{c|c|c|c|c|c|c}
\hline
$k_5$ & $0.4203 - 1.2660i$ & & $0.5122 - 1.5492i$ & & $1.3795 - 1.7090i$
\\
\hline
$e_1$ & 4.1e-04 $-$ 7.3e-04$i$ & & 2.9e-03 $-$ 2.1e-03$i$ & & 4.9e-03 $-$ 7.8e-03$i$ &
\\
\hline
$e_2$ & 1.1e-04 $-$ 1.3e-04$i$ & 2.30 & 6.9e-04 $-$ 4.6e-04$i$ & 2.12 & 1.4e-03 $-$ 1.8e-03$i$ & 2.04
\\
\hline
$e_3$ & 2.6e-05 $-$ 2.5e-05$i$ & 2.25 & 1.7e-04 $-$ 1.1e-04$i$ & 2.03 & 3.4e-04 $-$ 4.0e-04$i$ & 2.10
\\
\hline
$e_4$ & 6.3e-06 $-$ 5.3e-06$i$ & 2.12 & 4.3e-05 $-$ 2.5e-05$i$ & 2.03 & 8.5e-05 $-$ 9.5e-05$i$ & 2.04
\\
\hline
\end{tabular}
\caption{The error and
convergence order for the ellipse with $n_i = 0.25$.}
\label{T:ex4b}
\end{table}

\section{Conclusion}
In this paper, we propose a finite element method for scattering resonances of transmission problems. The resonances are formulated as eigenvalues of a holomorphic Fredholm operator function. The unbounded domain for the transmission problem is truncated and the DtN mapping is employed. Then a finite element method is used to discretize the operator function. Finally, a contour integral based method is used to find the eigenvalues of the resulting nonlinear matrix function. The optimal convergence of the eigenvalues is proved using the abstract approximation theory by Karma. Extensive numerical experiments were carried out to demonstrate the effectiveness of the proposed method and verify the theoretical results in literature. The results on non-convex domains suggests that theory in \cite{Popov1999} also hold for non-convex domains.

The proposed numerical method can be used to study inverse scattering problems related to target deformations, and defects in non-destructive testing. The computational results can also provide guidance for further theoretical investigations.  Extensions of the proposed method to Maxwell’s equations and elastic wave equations will be investigated in future. 

\section*{Acknowledgment}
The research of B. Gong is partially supported by National Natural Science Foundation of China No.12201019. The research of J. Sun is partially supported by an NSF Grant DMS-2109949 and a SIMONS Foundation Collaboration Grant 711922.


\end{document}